\newcommand{\kk}{\mathbf{k}}   
\newcommand{\nP}{\mathbf{P}}
\newcommand{\sO}{\mathscr{O}}                    
\newcommand{\sI}{\mathscr{I}} 
\newcommand{\sF}{\mathscr{F}}   
\newcommand\pr{\operatorname{pr}}
\newcommand\id{\operatorname{id}}
\newcommand\Sing{\operatorname{Sing}}
\newcommand\codim{\operatorname{codim}}
\newcommand\rank{\operatorname{rank}}
\newcommand\corank{\operatorname{corank}}
\newcommand\gon{\operatorname{gon}}
\newcommand\A[1]{{\textstyle\bigwedge^{#1}}}
\newcommand\Supp{\textup{Supp\,}}
\theoremstyle{plain}
\newtheorem{Thm}{Theorem}[section]
\newtheorem{Prop}[Thm]{Proposition}
\newtheorem{Lem}[Thm]{Lemma}
\theoremstyle{definition}
\newtheorem{Def}[Thm]{Definition}
\newtheorem{Q}[Thm]{Question}
\newtheorem{problem}[Thm]{Problem}
\theoremstyle{remark}
\newtheorem{Rmk}[Thm]{Remark}
\newtheorem{Ex}[Thm]{Example}
\numberwithin{equation}{section}
\begin{document}

\title[Syzygies of secant varieties of curves and gonality sequences]{Syzygies of secant varieties of smooth projective curves and gonality sequences}

\author{Junho Choe}
\address{School of Mathematics, KIAS, 85 Hoegiro Dongdaemun-gu, Seoul 02455, Republic of Korea.}
\email{junhochoe@kias.re.kr}

\author{Sijong Kwak}
\address{Department of Mathematical Sciences, KAIST, 291 Daehak-ro, Yuseong-gu, Daejeon 34141, Republic of Korea}
\email{sjkwak@kaist.ac.kr}

\author{Jinhyung Park}
\address{Department of Mathematical Sciences, KAIST, 291 Daehak-ro, Yuseong-gu, Daejeon 34141, Republic of Korea}
\email{parkjh13@kaist.ac.kr}

\begin{abstract}
The purpose of this paper is to prove that one can read off the gonality sequence of a smooth projective curve from syzygies of secant varieties of the curve embedded by a line bundle of sufficiently large degree. More precisely, together with Ein--Niu--Park's theorem, our main result  shows that the gonality sequence of a smooth projective curve completely determines the shape of the minimal free resolutions of secant varieties of the curve of sufficiently large degree. This is a natural generalization of the gonality conjecture on syzygies of smooth projective curves established by Ein--Lazarsfeld and Rathmann to the secant varieties. 
\end{abstract}

\date{\today}
\subjclass[2020]{14N07, 14N05, 13D02}
\keywords{algebraic curve, secant variety, syzygies, Koszul cohomology, gonality sequence, symmetric product of a curve}
\thanks{J. Choe was supported by a KIAS Individual Grant (MG083301) at Korea Institute for Advanced Study. J. Park was partially supported by the National Research Foundation (NRF) funded by the Korea government (MSIT) (NRF-2021R1C1C1005479 and NRF-2022M3C1C8094326).}

\maketitle



\section{Introduction}
Exploring the interplay between the geometric properties of algebraic varieties and the algebraic properties of the equations defining algebraic varieties is an important subject in algebraic geometry. Along this line, Green \cite{Green} and Ein--Lazarsfeld \cite{EL1} suggested studying syzygies of algebraic varieties from a geometric point of view. The case of smooth projective curves of large degree is now fairly well understood (see \cite{EL2}, \cite{Green}, \cite{GL}, \cite{Rathmann}). On the other hand, there has been a great deal of work on secant varieties of projective varieties in the last three decades particularly because some results on secant varieties have found some nontrivial applications to algebraic statistics and algebraic complexity theory. 
In \cite{CK}, Choe--Kwak observed that  there should be a ``matryoshka structure'' among secant varieties of projective varieties. In this paper, we show that the gonality sequence of a smooth projective curve gives a ``matryoshka structure'' for secant varieties of the curve. Our theorem is a natural generalization of the results of Ein--Lazarsfeld \cite{EL3} and Rathmann \cite{Rathmann} on the gonality conjecture of curves, and complements the theorem of Ein--Niu--Park \cite{ENP1} on syzygies of secant varieties of curves.

\medskip

Throughout the paper, we work over an algebraically closed field $\kk$ of characteristic zero. We start by recalling basic notions of syzygies of algebraic varieties. Let $X$ be a projective variety, and $L$ be a very ample line bundle on $X$ giving an embedding
$$
X \subseteq \nP H^0(X, L) = \nP^r.
$$
For a coherent sheaf $B$ on $X$, when $H^0(X,B\otimes L^m)=0$ for all $m\ll 0$, the section module $R=R(X, B; L):=\bigoplus_{m \in\mathbf{Z}}H^0(X,B \otimes L^m)$ is a finitely generated graded module over the homogeneous coordinate ring $S:=\bigoplus_{m \in\mathbf{Z}} S^m H^0(X,L)$ of $\nP^r$. By Hilbert's syzygy theorem, $R$ is minimally resolved as
$$
\begin{tikzcd}
0 & R \ar[l] & F_0 \ar[l] & F_1 \ar[l] & ~\cdots~ \ar[l] & F_r \ar[l] & 0 \ar[l],
\end{tikzcd}
$$
where
$$
F_p=\bigoplus_{q\in\mathbf{Z}}K_{p,q}(X,B; L)\otimes S(-p-q).
$$
Here $K_{p,q}(X, B; L)$ can be regarded as the space of $p$-th syzygies of weight $q$ of $R$ over $S$. This is called the \emph{Koszul cohomology}, and it is the cohomology of the following Koszul-type complex
\begin{multline*}
\A{p+1} H^0(X, L) \otimes H^0(X, B \otimes L^{q-1}) \longrightarrow  \A{p} H^0(X, L) \otimes H^0(X, B \otimes L^q)\\
\longrightarrow \A{p-1} H^0(X, L) \otimes H^0(X, B \otimes L^{q+1}).
\end{multline*}
For simplicity, we set $R(X, L):=R(X, \sO_X; L)$ and $K_{p,q}(X, L):=K_{p,q}(X, \sO_X; L)$.
For a globally generated vector bundle $E$, let $M_E$ be the kernel bundle of the evaluation map $\operatorname{ev} \colon H^0(X, E) \otimes \sO_X \to E$. 
It is well known (cf. \cite[Section 2.1]{AN}, \cite[Proposition 2.1]{Park}) that if $H^i(X, B \otimes L^m)=0$ for $i>0, m>0$, then 
$$
K_{p,q}(X, B; L) = H^1(X, \A{p+1}M_L \otimes B \otimes L^{q-1}) = \cdots = H^{q-1}(X, \A{p+q-1} M_L \otimes B \otimes L)
$$
for $p \geq 0$ and $q \geq 2$.
If furthermore $H^i(X, B)=0$ for $\max\{1,q-1\}  \leq i \leq q$ when $q \geq 1$ or $H^0(X, B \otimes L^{-1})=0$ when $q=0$, then
$$
K_{p,q}(X, B; L) = H^q(X, \A{p+q} M_L \otimes B)
$$
for $p \geq 0$ and $q \geq 0$.
The geometric approach to vanishing or nonvanishing of $K_{p,q}(X,B; L)$ is an active research subject in algebraic geometry.

\medskip

We now turn to the main objects of the paper -- the secant varieties of curves. Let $C$ be a smooth projective curve of genus $g$, and $L$ be a very ample line bundle on $C$ giving an embedding
$$
C \subseteq \nP H^0(C, L) = \nP^r.
$$
For an integer $k \geq 0$, assume that
$$
\deg L \geq 2g+2k+1,
$$
and consider the \emph{$k$-th secant variety}
$$
\Sigma_k:= \overline{\bigcup_{x_i \in C} \langle x_1,\ldots, x_{k+1} \rangle}\subseteq \nP^r
$$
of $C$ in $\nP^r$. In our case, $\Sigma_k$ is simply the union of $(k+1)$-secant $k$-planes to $C$ in $\nP^r$. 
We have the natural inclusions
$$
C=\Sigma_0 \subseteq \Sigma_1 \subseteq \cdots \subseteq \Sigma_{k-1} \subseteq \Sigma_k.
$$ 
Note that $\dim \Sigma_k = 2k+1$ and $\Sing \Sigma_k = \Sigma_{k-1}$. By \cite[Theorem 1.1]{ENP1}, $\Sigma_k$ has normal Du Bois singularities. The singularities of the first secant varieties of smooth projective varieties were before studied in \cite{CS} and \cite{Ullery}.

\medskip

A celebrated theorem of Green \cite{Green} asserts that if $\deg L \geq 2g+1+\ell$ for $\ell \geq 0$, then $C \subseteq \nP^r$ is arithmetically Cohen--Macaulay and $L$ satisfies property $N_{2,\ell}$; in other words,
$$
K_{p,q}(C, L) = 0~~\text{ for $0 \leq p \leq \ell$ and $q \geq 2$}.
$$
A natural generalization of Green's theorem to secant varieties of $C$ in $\nP^r$ was conjectured by Sidman--Vermeire \cite[Conjecture 1.3]{SV1}, and this was established by Ein, Niu, and the third author \cite[Theorem 1.2]{ENP1}: If $\deg L \geq 2g+2k+1+\ell$ for $\ell \geq 0$, then $\Sigma_k \subseteq \nP^r$ is arithmetically Cohen--Macaulay, and $\sO_{\Sigma_k}(1)$ satisfies property $N_{k+2, \ell}$; in other words,
$$
K_{p,q}(\Sigma_k, \sO_{\Sigma_k}(1)) = 0~~\text{ for $0 \leq p \leq \ell$ and $q \geq k+2$}.
$$
By specializing Danila's theorem \cite{Danila} to the curve case (see \cite{ENP2}), we have
$$
H^0(\Sigma_k, \sO_{\Sigma_k}(m))=H^0(\nP^r, \sO_{\nP^r}(m))~~\text{ for $0 \leq m \leq k+1$}.
$$
This then implies that if $0 \leq p \leq \ell$, then
$$
K_{p,q}(\Sigma_k, \sO_{\Sigma_k}(1)) \neq 0~~\Longleftrightarrow~~\text{$p=0,q=0$ or $1 \leq p \leq \ell$ and $q=k+1$}.
$$
Thus the shape of the first $\ell$ steps of the minimal free resolution of $R(\Sigma_k, \sO_{\Sigma_k}(1))$ is completely determined. Recall from \cite[Theorem 1.2]{ENP1} that the Castelnuovo--Mumford regularity of $\sO_{\Sigma_k}$ is $2k+2$ if $g \geq 1$ and $k+1$ if $g=0$. As $\Sigma_k \subseteq \nP^r$ is arithmetically Cohen--Macaulay, the projective dimension of $R(\Sigma_k, \sO_{\Sigma_k}(1))$ is $e:= \codim \Sigma_k$. Notice that if $\deg L = 2g+2k+1+\ell$, then $\ell = e-g$. To summarize the discussion, the Betti table of $R(\Sigma_k, \sO_{\Sigma_k}(1))$ is the following:

\begin{table}[h]
\texttt{\begin{tabular}{c|ccccccccc}
& $0$ & $1$ & $2$ & $~\cdots$ &$e-g-1$ & $e-g$ &  $e-g+1$ & $\cdots$ & $~e$ \\ \hline
$0$ & 1 & - & - & $~\cdots$  & - & - & - & $\cdots$ & ~-  \\
$1$ & - & -  & -  & $~\cdots$ & - & - & - & $\cdots$ & ~- \\
$\vdots$ & $\vdots$  & $\vdots$  & $\vdots$ & & $\vdots$   & $\vdots$  & $\vdots$ & & $~\vdots$ \\
$k$ & - & - & - & $~\cdots$ & - & - & - & $\cdots$ & ~- \\
$k+1$ & - & * & * & $~\cdots$ & * & * & ? & $\cdots$ & ~? \\
$k+2$ & - & - & - & $~\cdots$ & - & -  & ? & $\cdots$ & ~? \\
$\vdots$ & $\vdots$  & $\vdots$  & $\vdots$ & & $\vdots$  & $\vdots$ &$\vdots$  & & $~\vdots$\\
$2k+2$ & - & - & - & $~\cdots$ & -  & - & ? & $\cdots$ & ~?
\end{tabular}}\\[10pt]
\caption{\label{table}The Betti table of $R(\Sigma_k, \sO_{\Sigma_k}(1))$}
\end{table}

\noindent \\[-30pt] Here ``\texttt{-}'' indicates a zero entry, ``\texttt{*}'' indicates a nonzero entry, and ``\texttt{?}'' indicates an entry not yet determined. We remark that the syzygies of $\Sigma_k$ in $\nP^r$ are not ``asymptotic syzygies'' considered in \cite{EL2} and \cite{Park}.

\medskip

It is natural to study the undetermined part of the Betti table of $R(\Sigma_k, \sO_{\Sigma_k}(1))$. The problem is to determine vanishing and nonvanishing of $K_{p,q}(\Sigma_k, \sO_{\Sigma_k}(1))$ for $e-g+1 \leq p \leq e$ and $k+1 \leq q \leq 2k+2$. The case of $g=0$ is a classical result, the case of $g=1$ was done by Graf von Bothmer--Hulek \cite{GvBH} and Fisher \cite{Fisher}, and the case of $g=2$ was recently settled by Li \cite{Li}. For the case of $g \geq 3$, we assume that $\deg L$ is sufficiently large.
Consider the case of $k=0$. Green--Lazarsfeld \cite[Theorem 2]{GL} proved that 
$$
K_{p,2}(C, L) \neq 0~~\text{ for $e-g+1 \leq p \leq e$}.
$$
This was recently generalized by Taylor \cite[Corollary 3.6]{Taylor} as
$$
K_{p,2k+2}(\Sigma_k, \sO_{\Sigma_k}(1)) \neq 0~~\text{ for $e-g+1 \leq p \leq e$}.
$$
The gonality conjecture of Green--Lazarsfeld asserts that
$$
K_{p,1}(C, L) \neq 0~~\text{ for $1 \leq p \leq\codim (C)-\gon(C) +1$},
$$
where 
$$
\gon(C):=\min\{ d \mid \text{$C$ carries a linear series $g_d^1$} \}
$$
is the \emph{gonality} of $C$. The gonality conjecture was established by Ein--Lazarsfeld \cite{EL3} and Rathmann \cite{Rathmann}. This suggests that the geometry of $C$ is deeply related to vanishing and nonvanishing of $K_{p,1}(C, L)$ for large $p$. Along this line, Lawrence Ein previously asked what kind of geometry of $C$ is involved in the behavior of vanishing and nonvanishing of $K_{p,q}(\Sigma_k, \sO_{\Sigma_k}(1))$ for large $p$. In \cite{CK}, the first and second authors proposed that one should consider the \emph{gonality sequence} of $C$. For an integer $q \geq 0$, let
$$
\gamma^q=\gamma^q(C):=\min\{d -q \mid \text{$C$ carries a linear series $g_d^q$} \}.
$$
Then $\gamma^1+1=\gon(C)$. The gonality sequence of $C$ is a sequence $(\gamma^0+0, \gamma^1+1, \gamma^2+2, \ldots)$. The gonality sequence was previously studied by several authors in the theory of algebraic curves (see e.g., \cite{CKM}, \cite{LM}). The conjecture of the first and second authors in \cite{CK} predicts that
$$
K_{p,k+1}(\Sigma_k, \sO_{\Sigma_k}(1)) \neq 0~~\Longleftrightarrow~~1\leq p \leq e-\gamma^{k+1}.
$$
However, the cases of $K_{p,q}(\Sigma_k, \sO_{\Sigma_k}(1))$ for $k+2 \leq q \leq 2k+1$ remained a mystery in general.

\medskip

In this paper, we completely resolve all the aforementioned problems at least when $L$ is sufficiently positive: We show that the gonality sequence of $C$ determines vanishing and nonvanishing of $K_{p,q}(\Sigma_k, \sO_{\Sigma_k}(1))$ for $e-g+1 \leq p \leq e$ and $k+1 \leq q \leq 2k+2$. 

\begin{Thm}\label{main}
Let $C$ be a smooth projective curve of genus $g \geq 2$, and $L$ be a very ample line bundle of sufficiently large degree on $C$. For an integer $k \geq 0$, consider the $k$-th secant variety $\Sigma_k$ of $C$ in $\nP H^0(C, L) = \nP^r$, and put $e:= \codim \Sigma_k = r-2k-1$. For each $k+1 \leq q \leq 2k+2$, if $e-g+1 \leq p \leq e$, then we have 
$$
K_{p,q}(\Sigma_k, \sO_{\Sigma_k}(1)) \neq 0~~\Longleftrightarrow~~e - g+1 \leq p \leq e - \gamma^{2k+2-q}(C).
$$
\end{Thm}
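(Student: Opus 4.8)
The plan is to transport the computation of $K_{p,q}(\Sigma_k,\sO_{\Sigma_k}(1))$ onto the $(k+1)$-fold symmetric product $C_{k+1}$, convert the problem to the dualizing sheaf by Koszul duality, and then read off the gonality sequence from a sharp vanishing--nonvanishing dichotomy for the cohomology of kernel bundles there. First I would set up the secant-bundle resolution: for $\deg L\gg 0$ the rank-$(k+1)$ bundle $E:=E_{k+1,L}$ on $C_{k+1}$, whose fiber over $\xi$ is $H^0(C,L\otimes\sO_\xi)$, is globally generated by $W:=H^0(C,L)$, and the induced morphism $\beta\colon \nP(E)\to\nP^r$ resolves $\Sigma_k$ with $\beta^*\sO_{\nP^r}(1)=\sO_{\nP(E)}(1)$. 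Since $\Sigma_k$ is normal with $\beta_*\sO_{\nP(E)}=\sO_{\Sigma_k}$, the section module $R(\Sigma_k,\sO_{\Sigma_k}(1))$ coincides with $R(\nP(E),\sO_{\nP(E)}(1))$ as a module over $S=\operatorname{Sym}W$, so the two have the same Koszul cohomology. Using the projection $\pi\colon\nP(E)\to C_{k+1}$ together with $\pi_*\sO_{\nP(E)}(m)=S^mE$, I would push the Koszul complex down and reduce $K_{p,q}(\Sigma_k,\sO_{\Sigma_k}(1))$ to the cohomology of bundles of the form $\A{a}M_E\otimes S^bE$ (and their twists by $\det E$) on $C_{k+1}$, where $M_E:=\ker(W\otimes\sO_{C_{k+1}}\to E)$.

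Because $\Sigma_k$ is arithmetically Cohen--Macaulay of dimension $2k+1$ and codimension $e$, I would then invoke Koszul duality
\[
K_{p,q}(\Sigma_k,\sO_{\Sigma_k}(1))^*\;\cong\;K_{e-p,\,2k+2-q}(\Sigma_k,\omega_{\Sigma_k};\sO_{\Sigma_k}(1)).
\]
Writing $p'=e-p$ and $q'=2k+2-q$, the theorem becomes the assertion that for $0\le q'\le k+1$ and $0\le p'\le g-1$,
\[
K_{p',q'}(\Sigma_k,\omega_{\Sigma_k};\sO_{\Sigma_k}(1))\neq 0\quad\Longleftrightarrow\quad \gamma^{q'}(C)\le p'.
\]
The payoff is that the dualizing sheaf $\omega_{\Sigma_k}$ has an explicit description---computed in \cite{ENP1} via $\beta$---so the reduction of the previous paragraph expresses $K_{p',q'}(\Sigma_k,\omega_{\Sigma_k};\sO_{\Sigma_k}(1))$ as $H^{q'}$ of a kernel bundle $\A{p'+q'}M_E$ twisted by an explicit line bundle $N$ assembled from $\omega_{C_{k+1}}$, $\det E$, and the diagonal class. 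The case $q'=0$ (where $\gamma^0=0$) must recover Taylor's nonvanishing \cite{Taylor} over the whole interval $e-g+1\le p\le e$, which serves as a consistency check, and the hypothesis $g\ge 2$ ensures the interval in question is nonempty and genuinely new.

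The bridge to the gonality sequence comes from peeling the symmetric product down to the curve. The bundle $M_E$ is governed by $M_L$ on $C$ through the standard addition/restriction diagrams relating $C_{k+1}$, $C_k$, and $C\times C_k$; these yield short exact sequences that let me induct on $k$ and reduce the cohomology on $C_{k+1}$ to Koszul cohomology of $C$ with suitable twists. For the nonvanishing half ($\gamma^{q'}\le p'\le g-1$) I would fix a line bundle $A$ realizing $\gamma^{q'}$, that is, $h^0(A)=q'+1$ and $\deg A=\gamma^{q'}+q'$, so that by Riemann--Roch $h^1(A)=g-\gamma^{q'}$. In the spirit of the Green--Lazarsfeld nonvanishing theorem \cite{GL} and its refinements, the $q'+1$ sections of $A$ account for the cohomological degree $q'$, while the $h^1(A)$ sections of the residual $K_C\otimes A^{-1}$ produce nonzero classes throughout the interval $\gamma^{q'}\le p'\le g-1$, whose length $g-\gamma^{q'}=h^1(A)$ is exactly right.

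The hard part will be the sharp vanishing $K_{p',q'}(\Sigma_k,\omega_{\Sigma_k};\sO_{\Sigma_k}(1))=0$ for $p'<\gamma^{q'}$: this is the statement that no syzygies persist below the threshold set by the gonality sequence, and it is the genuine generalization of the Ein--Lazarsfeld--Rathmann gonality theorem \cite{EL3, Rathmann} (the case $k=0$, $q'=1$ being precisely the sharp vanishing of the linear strand of $C$ beyond $r-\gon(C)$). I would aim to prove a vanishing theorem for $H^{q'}(C_{k+1},\A{p'+q'}M_E\otimes N)$ that holds exactly when $p'<\gamma^{q'}$, adapting Rathmann's restriction method: the absence of a $g^{q'}_d$ with $d-q'<\gamma^{q'}$ should be leveraged, after restricting along the evaluation maps attached to effective divisors, to kill the relevant cohomology. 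Making this restriction argument work on the symmetric product rather than on $C$ itself, controlling the twist $N$ and the higher direct images uniformly in $k$, and keeping the bound sharp, is where the principal difficulty lies; this is the step I expect to demand the most new input beyond assembling the existing machinery.
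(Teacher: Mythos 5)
Your setup (tautological bundles on symmetric products, duality to the canonical module, and the translation of the gonality condition into a bound on $h^1(C,\omega_C(-\xi))$ for $\xi$ of the relevant degree) matches the paper's ingredients, but there is a genuine gap at the central reduction step. You propose to identify $K_{p,q}(\Sigma_k,\sO_{\Sigma_k}(1))$ with cohomology of bundles $\A{a}M_{E_{k+1,L}}\otimes S^bE_{k+1,L}$ on $C_{k+1}$ by pushing the Koszul complex through the resolution $\beta_k\colon\nP(E_{k+1,L})\to\Sigma_k$. This transfer fails in exactly the range $q\geq k+1$ that the theorem concerns: $\Sigma_k$ has only Du Bois, not rational, singularities, so $R^{>0}\beta_{k,*}\sO_{B_k}\neq 0$ and $H^{q-1}(\Sigma_k,\A{p+q-1}M_{\sO_{\Sigma_k}(1)}(1))$ is not computed on $B_k=\nP(E_{k+1,L})$; the discrepancy between $\Sigma_k$ and its resolution is precisely where the undetermined part of the Betti table lives. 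The same problem afflicts the dual side: what the cohomology of $\A{p'+q'}M_{E_{k+1,L}}\otimes S_{k+1,\omega_C}$ on $C_{k+1}$ actually computes is (the dual of) $H^{\bullet}(\Sigma_k,\A{\bullet}M\otimes\sI_{\Sigma_{k-1}|\Sigma_k}(1))$, not $K_{p',q'}(\Sigma_k,\omega_{\Sigma_k};\sO_{\Sigma_k}(1))$, because the only sheaf on $B_k$ whose derived pushforward is controlled is $\sO_{B_k}(-Z_{k-1})$ (with $R^0=\sI_{\Sigma_{k-1}|\Sigma_k}$ and $R^{>0}=0$), and dually $\omega_{B_k}(Z_{k-1})$ has nontrivial higher direct images supported on $\Sigma_{k-1}$. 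The paper's key structural idea, absent from your proposal, is to go \emph{up} to $\Sigma_{k+1}$: the short exact sequence $0\to\sI_{\Sigma_k|\Sigma_{k+1}}\to\sO_{\Sigma_{k+1}}\to\sO_{\Sigma_k}\to 0$ sandwiches the group you want between $H^{\bullet}(\Sigma_{k+1},\A{\bullet}M\otimes\sO_{\Sigma_{k+1}}(1))$, handled by induction on $q$ starting from $q=k+1$ (where it vanishes by the known shape of the Betti table of $\Sigma_{k+1}$), and $H^{\bullet}(\Sigma_{k+1},\A{\bullet}M\otimes\sI_{\Sigma_k|\Sigma_{k+1}}(1))$, which \emph{is} computable on $C_{k+2}$ by Serre duality and the identity $\omega_{B_{k+1}}(Z_k)=\pi_{k+1}^*S_{k+2,\omega_C}$. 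Without this device your reduction to the symmetric product does not get off the ground.

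Two further points. For the vanishing half you plan to adapt Rathmann's restriction method; the paper instead uses an Ein--Lazarsfeld-style asymptotic argument: after reducing to $C_{p^*+q^*}\times C_{k+2}$, the higher Leray terms die by Fujita--Serre vanishing ($\deg L\gg 0$), and the crucial $i=0$ term dies because the direct image sheaf $R^{q^*+1}\pr_{1,*}$ is identically zero --- its fibers are $S^{k+1-q^*}H^0(C,\omega_C(-\xi))\otimes\A{q^*+1}H^1(C,\omega_C(-\xi))$ and $h^1(C,\omega_C(-\xi))\leq q^*$ under the hypothesis $\gamma^{q^*}(C)\geq p^*+1$. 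This is both simpler and better suited to the asymptotic statement than a restriction argument. For the nonvanishing half, your Green--Lazarsfeld-style construction from a $g^{q'}_d$ is plausible in spirit, but the paper's actual mechanism is to show that a connecting map of direct image sheaves is nonzero by evaluating at a single divisor $\xi$ with $h^1(C,\omega_C(-\xi))=q^*+1$ exactly (such $\xi$ exists by an elementary lemma once $\gamma^{q^*}(C)\leq p^*$), where the map becomes $\id\otimes\delta$ for a visibly nonzero Koszul-type map $\delta$; making a direct syzygy construction work for $q'\geq 2$ on a secant variety would require substantial additional work that your sketch does not supply.
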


As we discussed before, together with \cite{ENP1}, our main theorem completely determines the shape of the Betti table of $R(\Sigma_k, \sO_{\Sigma_k}(1))$ in Table \ref{table}. Our approach using secant varieties gives an alternative proof of the gonality conjecture which is nothing but the case of $k=0$ in Theorem \ref{main}.
On the other hand, by duality, we have
$$
K_{p,q}(\Sigma_k, \sO_{\Sigma_k}(1)) = K_{e-p, 2k+2-q}(\Sigma_k, \omega_{\Sigma_k}; \sO_{\Sigma_k}(1))^{\vee}.
$$
The nontrivial parts  covered by Theorem \ref{main} are $K_{p,q}(\Sigma_k, \omega_{\Sigma_k}; \sO_{\Sigma_k}(1))$ for $0 \leq p \leq e-1$ and $0 \leq q \leq k+1$. The Betti table of $R(\Sigma_k, \omega_{\Sigma_k}; \sO_{\Sigma_k}(1))$ in this range -- the reverse of the part marked with ``\texttt{?}'' in Table \ref{table} -- is the following:

\begin{table}[h]
\begin{tabular}{c}
$\overbrace{\texttt{* $\cdots$ * * $\cdots$ * * $\cdots$ * * $\cdots$ *}}^{g}$\\
$\underbrace{\texttt{- $\cdots$ -}}_{\gamma^1(C)}$ \texttt{* $\cdots$ * * $\cdots$ * * $\cdots$ *}\\
$\underbrace{\texttt{- $\cdots$ - - $\cdots$ -}}_{\gamma^2(C)}$ \texttt{* $\cdots$ * * $\cdots$ *}\\
$\vdots$ \\[5pt]
$\underbrace{\texttt{- $\cdots$ - - $\cdots$ - - $\cdots$ -}}_{\gamma^{k+1}(C)}$ \texttt{* $\cdots$ *}
\end{tabular}\\[10pt]
\caption{\label{table2}The Betti table of $R(\Sigma_k, \omega_{\Sigma_k}; \sO_{\Sigma_k}(1))$}
\end{table}

\noindent \\[-30pt] Notice that $\gamma^q(C) = g$ for $q \geq g$. Thus if $k \geq g-1$, then the last $k-g+2$ rows of Table \ref{table2} are all vanishing; in particular,
\begin{align*}
&K_{p, k+1}(\Sigma_k, \sO_{\Sigma_k}(1)) =0~~\text{ for $e-g+1 \leq p \leq e$}; \\
&K_{p,q}(\Sigma_k, \sO_{\Sigma_k}(1)) = 0~~\text{ for all $p$ and $k+2 \leq q \leq 2k+2-g$}.
\end{align*}
Next, observe that the first $m+1$ rows of Table \ref{table2} have the same vanishing and nonvanishing patterns as those of the Betti table of $R(\Sigma_m, \omega_{\Sigma_m}; \sO_{\Sigma_m}(1))$ for each $0 \leq m \leq k$. Thus the syzygies of secant varieties of $C$ have a surprisingly uniform behavior governed by the gonality sequence of $C$. It resembles a matryoshka doll, which repeats similar patterns over and over again, so one may say that there is a ``matryoshka structure'' among secant varieties of $C$ in the sense of \cite{CK}.

\medskip

To prove Theorem \ref{main}, we utilize Bertram's construction \cite{Bertram} as in \cite{ENP1}. There is a vector bundle $E_{k+2, L}$ on the symmetric product $C_{k+2}$ of $C$ such that $\beta_{k+1} \colon \nP (E_{k+2, L}) \to \Sigma_{k+1}$ is a resolution of singularities and $Z_{k}:=\beta_{k+1}^{-1}(\Sigma_{k})$ is an effective divisor.  It is worth noting that we are working with $\Sigma_{k+1}$ to prove Theorem \ref{main} for $\Sigma_k$ instead of going to $\Sigma_{k-1}$ as in \cite{ENP1}.

\subsection*{Vanishing} Using the Du Bois-type condition
$$
R^i \beta_{k+1,*} \sO_{\nP (E_{k+2, L})}(-Z_{k}) = \begin{cases} \sI_{\Sigma_{k}|\Sigma_{k+1}} & \text{if $i=0$} \\ 0 & \text{if $i>0$}\end{cases}
$$
established in \cite{ENP1} and proceeding by induction on $q-k-1$, we reduce the vanishing part of Theorem \ref{main} to
$$
H^{q^*+1}(C_{k+2}, \A{p^*+q^*} M_{E_{k+2, L}} \otimes S_{k+2, \omega_C})=0,
$$
where $p^*:=e-p$ and $q^*:=2k+2-q$. Here $M_{E_{k+2, L}}$ is the kernel of the evaluation map $H^0(C, L) \otimes \sO_{C_{k+2}} \to E_{k+2, L}$, and $S_{k+2, \omega_C}$ is a line bundle with $q_{k+2}^* S_{k, L} = L^{\boxtimes k+2}$, where $q_{k+2} \colon C^{k+2} \to C_{k+2}$ is the map given by $(x_1, \ldots, x_{k+2}) \mapsto x_1 + \cdots + x_{k+2}$.
We then show that
\begin{align*}
&H^{q^*+1}(C_{k+2}, \A{p^*+q^*} M_{E_{k+2, L}} \otimes S_{k+2, \omega_C}) \\
&= H^{q^*+1}(C_{p^*+q^*} \times C_{k+2}, (N_{p^*+q^*,L} \boxtimes S_{k+2, \omega_C})(-D_{p^*+q^*, k+2})).
\end{align*}
Here $N_{p^*+q^*, L}$ is a line bundle with $q_{p^*+q^*}^* N_{p^*+q^*, L} = L^{\boxtimes p^*+q^*}(-\Delta)$, where $\Delta$ is the sum of all pairwise diagonal on $C^{p^*+q^*}$. 
Let  $\pr_1 \colon C_{p^*+q^*} \times C_{k+2} \to C_{p^*+q^*}$ be the projection map, and $D_{p^*+q^*, k+2}:=\{(\xi_1+x, \xi_2+x) \mid \xi_1 \in C_{p^*+q^*-1}, \xi_2 \in C_{k+1}, x \in C\}$ be an effective divisor on $C_{p^*+q^*} \times C_{k+2}$.
Then it is enough to check that
\begin{equation}\label{eq:vanintro}\tag{$\star$}
H^i(C_{p^*+q^*}, R^{q^*+1-i} \pr_{1,*} (N_{p^*+q^*,L} \boxtimes S_{k+2, \omega_C})(-D_{p^*+q^*, k+2})) =0~~\text{ for $0 \leq i \leq q^*+1$}.
\end{equation}
When $i>0$, the cohomology vanishing (\ref{eq:vanintro}) follows from Fujita--Serre vanishing since $N_{p^*+q^*, L}$ is sufficiently positive. When $i=0$, the fiber of $R^{q^*+1} \pr_{1,*} (N_{p^*+q^*,L} \boxtimes S_{k+2, \omega_C})(-D_{p^*+q^*,k+2})$ over $\xi \in C_{p^*+q^*}$ is 
$$
H^{q^*+1}(C_{k+2}, S_{k+2, \omega_C(-\xi)}) = S^{k+1-q^*} H^0(C, \omega_C(-\xi)) \otimes \A{q^*+1} H^1(C, \omega_C(-\xi)).
$$
However, $h^1(C, \omega_C(-\xi)) \leq q^*$ thanks to the ``gonality sequence condition'' $\gamma^{q^*}(C) \geq p^*+1$, so 
$$
R^{q^*+1} \pr_{1,*} (N_{p^*+q^*,L} \boxtimes S_{k+2, \omega_C})(-D_{p^*+q^*,k+2}) = 0.
$$
Thus (\ref{eq:vanintro}) holds for $i=0$.

\subsection*{Nonvanishing}
For the nonvanishing part of Theorem \ref{main}, it suffices to see that the map
$$
H^{q-1}(\Sigma_k, \A{p+q-1}M_{\sO_{\Sigma_k}(1)}\otimes \sI_{\Sigma_{k-1}|\Sigma_k}(1)) \longrightarrow H^q(\Sigma_{k+1}, \A{p+q-1}M_{\sO_{\Sigma_{k+1}}(1)}\otimes \sI_{\Sigma_k|\Sigma_{k+1}}(1))
$$
is nonzero. Arguing as in the proof of the vanishing part, we reduce the problem to showing that the map
\begin{multline}\label{eq:nonvanintro}\tag{{\tiny $\blacklozenge$}}
R^{q^*+1} \pr_{1,*} (N_{p^*+q^*,L} \boxtimes S_{k+2, \omega_C})(-D_{p^*+q^*,k+2}) \\
 \longrightarrow R^{q^*} \pr_{1,*} (N_{p^*+q^*,L} \boxtimes S_{k+1, \omega_C})(-D_{p^*+q^*,k+1}) \otimes H^1(C, \omega_C)
\end{multline}
is nonzero. By the ``gonality sequence condition'' $\gamma^{q^*}(C) \leq p^*$, we can find an effective divisor $\xi \in C_{p^*+q^*}$ with $h^0(C, \omega_C(-\xi)) = g-p^*$ and $h^1(C, \omega_C(-\xi)) = q^*+1$. The map (\ref{eq:nonvanintro}) looks like $\id_{S^{k+1-q^*} H^0(C, \omega_C(-\xi))} \otimes \delta$ fiberwisely over $\xi \in C_{p^*+q^*}$, where $\delta$ is the Koszul-like map
$$
\A{q^*+1} H^1(C, \omega_C(-\xi)) \longrightarrow \A{q^*} H^1(C, \omega_C(-\xi)) \otimes H^1(C, \omega_C).
$$
Since $\delta$ is clearly nonzero, it follows that the map (\ref{eq:nonvanintro}) is nonzero.

\medskip

The paper is organized as follows. We begin with collecting basic relevant facts on the gonality sequence of a curve in Section \ref{secgon}. Section \ref{secsym} provides a review of basic properties of symmetric products and secant varieties of curves. Section \ref{secproof} is devoted to the proof of Theorem \ref{main}. Finally, in Section \ref{secquestions}, we present some complementary results, and we also discuss some open problems.

\subsection*{Acknowledgments.} We would like to thank Lawrence Ein and Wenbo Niu for valuable and interesting discussions. We are also grateful to Daniele Agostini, Marian Aprodu, Daniel Erman, Robert Lazarsfeld, Frank-Olaf Schreyer, Jessica Sidman for their interests.

\section{Gonality Sequences}\label{secgon}
In this section, we recall the definition and basic properties of the gonality sequence of a smooth projective curve $C$ of genus $g \geq 2$, and we show some relevant facts.

\begin{Def}\label{defhighergon}
For any integer $q\geq 0$, we define
$$
\gamma^q(C):=\min\{ d- q \mid \textup{$C$ carries a linear series $g_d^q$}\}.
$$
A sequence $(\gamma^0(C)+0, \gamma^1(C)+1, \gamma^2(C)+2, \ldots)$ is called the \emph{gonality sequence} of $C$.
\end{Def}

Note that $\gamma^0(C) = 0$ and $\gamma^1(C)+1 = \gon (C)$ is the gonality of $C$. The following is an easy consequence of the Riemann--Roch theorem, the Clifford theorem, and Brill--Noether theory.

\begin{Lem}[{\cite[Lemmas 3.1 and 3.2]{LM}}]\label{basicgonseq}
We have the following:
\begin{enumerate}
\item $\gamma^q(C) \leq \gamma^{q+1}(C)$ for $q \geq 0$.
\item $\min \{q, g\} \leq \gamma^q(C) \leq g - \lfloor g/(q+1) \rfloor$ for $q \geq 0$. In particular, $\gamma^{g-1}(C) = g-1$ and $\gamma^q(C) = g$ for $q \geq g$.
\end{enumerate}
\end{Lem}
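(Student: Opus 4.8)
\emph{Setup.} The plan is first to unwind the definition. A linear series $g^q_d$ is the datum of a line bundle $A$ on $C$ with $\deg A = d$ together with a subspace $V \subseteq H^0(C, A)$ of vector-space dimension $q+1$; thus $\gamma^q(C) = \min(d-q)$ taken over all such pairs, and in particular any $A$ carrying a $g^q_d$ satisfies $h^0(C, A) \geq q+1$.

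\emph{Monotonicity.} For part (1) I would simply remove a base point. Choose a $g^{q+1}_d$ computing $\gamma^{q+1}(C)$, so that $d - (q+1) = \gamma^{q+1}(C)$, realized by $V \subseteq H^0(C, A)$ with $\dim V = q+2$, and fix any point $x \in C$. Imposing vanishing at $x$ is a single linear condition, so $V(-x) := \{s \in V : s(x) = 0\}$ has dimension at least $q+1$ and consists of sections of $A(-x)$, a line bundle of degree $d-1$. After shrinking to dimension exactly $q+1$ we obtain a $g^q_{d-1}$, whence $\gamma^q(C) \leq (d-1) - q = \gamma^{q+1}(C)$.

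\emph{Upper bound.} For the upper bound in part (2) the key input is the Brill--Noether existence theorem of Kempf and Kleiman--Laksov: every smooth curve of genus $g$ carries a $g^q_d$ as soon as the Brill--Noether number $\rho = g - (q+1)(g - d + q)$ is nonnegative. The inequality $\rho \geq 0$ is equivalent to $d \geq g + q - g/(q+1)$, and since $g + q$ is an integer the smallest admissible $d$ is $g + q - \lfloor g/(q+1) \rfloor$; the corresponding series yields $\gamma^q(C) \leq d - q = g - \lfloor g/(q+1) \rfloor$. I expect this to be the main obstacle, not because the arithmetic is hard but because it rests entirely on the existence theorem holding for \emph{every} curve, not merely the general one, which is the single genuinely nontrivial ingredient.

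\emph{Lower bound and the extremal cases.} For the lower bound in part (2) I would argue by cases on a given $g^q_d \subseteq |A|$. If $A$ is special, then Clifford's theorem applied to the effective class $A$ gives $q \leq h^0(C, A) - 1 \leq \tfrac{1}{2}\deg A = d/2$, hence $d - q \geq q$. If $A$ is nonspecial, then Riemann--Roch gives $h^0(C, A) = d - g + 1 \geq q + 1$, hence $d - q \geq g$. In either case $d - q \geq \min\{q, g\}$, so $\gamma^q(C) \geq \min\{q, g\}$. Finally, the two ``in particular'' assertions are immediate specializations: at $q = g-1$ the bounds read $g - 1 \leq \gamma^{g-1}(C) \leq g - \lfloor g/g \rfloor = g-1$, forcing equality, while for $q \geq g$ one has $0 < g/(q+1) < 1$, so $\lfloor g/(q+1) \rfloor = 0$ and the upper and lower bounds both pinch $\gamma^q(C)$ to $g$.
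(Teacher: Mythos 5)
Your proof is correct and uses exactly the ingredients the paper points to: the paper gives no argument of its own, citing \cite[Lemmas 3.1 and 3.2]{LM} and remarking that the statement is an easy consequence of Riemann--Roch, Clifford's theorem, and Brill--Noether theory, which is precisely the base-point removal, Clifford/Riemann--Roch dichotomy, and Kempf--Kleiman--Laksov existence argument you supply. Nothing further is needed.
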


If $C$ is hyperelliptic, then $\gamma^q(C) = q$ for $q \leq g$. However, as was remarked in \cite{LM}, it is not easy to compute the gonality sequence of a curve in general. We refer to \cite{LM} for more details.

\medskip

Next, we introduce a new positivity notion for a line bundle on $C$.

\begin{Def}
Let $B$ be a line bundle on $C$. For integers $w, p \geq 0$, we say that $B$ is \emph{$w$-weakly $p$-very ample} if 
$$
\corank \big(H^0(C,B) \longrightarrow H^0(C,B|_\xi) \big)\leq w
$$
for every effective divisor $\xi$ of degree $p+w+1$ on $C$.
\end{Def}

Note that $B$ is $0$-weakly $p$-very ample if and only if $B$ is $p$-very ample. Recall that $\gamma^1(C) \geq p+1$ (i.e., $\gon(C) \geq p+2$) if and only if $\omega_C$ is $p$-very ample. The next proposition is a generalization of this fact.

\begin{Prop}\label{gonseq-h^1}
Let $q \geq 1$ be an integer. Then the following are equivalent:
\begin{enumerate}
\item $\gamma^q(C) \geq p+1$.
\item $h^0(C, \sO_C(\xi)) = h^1(C, \omega_C(-\xi)) \leq q$ for every effective divisor $\xi$ of degree $p+q$ on $C$.
\item $\omega_C$ is $(q-1)$-weakly $p$-very ample.
\end{enumerate} 
In particular,
\begin{align*}
\gamma^q(C)&=\max\{p\geq0 \mid \omega_C\textup{ is $(q-1)$-weakly $p$-very ample}\}+1 \\
&=\min\{p\geq0 \mid \omega_C\textup{ fails to be $(q-1)$-weakly $p$-very ample}\}.
\end{align*}
\end{Prop}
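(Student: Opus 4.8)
The plan is to prove the pairwise equivalences (1)$\Leftrightarrow$(2) and (2)$\Leftrightarrow$(3), and then deduce the two displayed formulas directly from (1)$\Leftrightarrow$(3). Throughout I treat the equality $h^0(C,\sO_C(\xi))=h^1(C,\omega_C(-\xi))$ in (2) as a free consequence of Serre duality, so that (2) amounts to the single inequality $h^0(C,\sO_C(\xi))\le q$ for every effective $\xi$ of degree $p+q$. The first preliminary observation is the standard reformulation: a $g^q_d$ exists on $C$ if and only if there is a line bundle $A$ with $\deg A=d$ and $h^0(C,A)\ge q+1$. Consequently $\gamma^q(C)\ge p+1$ says exactly that no line bundle of degree $\le p+q$ carries $q+1$ independent sections.

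With this reformulation, (1)$\Rightarrow$(2) is immediate: if $\xi$ is effective of degree $p+q$ with $h^0(C,\sO_C(\xi))\ge q+1$, then $\sO_C(\xi)$ defines a $g^q_{p+q}$, forcing $\gamma^q(C)\le p$, a contradiction. For (2)$\Rightarrow$(1) I argue contrapositively: if $\gamma^q(C)\le p$, choose a line bundle $A$ with $\deg A\le p+q$ and $h^0(C,A)\ge q+1$; since $A$ has a section it is of the form $\sO_C(D)$ for an effective divisor $D$, and I pad $D$ by an arbitrary effective divisor of degree $p+q-\deg A\ge 0$ to obtain an effective $\xi$ of degree exactly $p+q$. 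As enlarging the divisor only increases sections, $h^0(C,\sO_C(\xi))\ge h^0(C,A)\ge q+1$, which violates (2). The only delicate point here is this padding by base points to reach degree exactly $p+q$.

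The heart of the argument is (2)$\Leftrightarrow$(3), where I must compute the corank appearing in the definition of weak very ampleness. I would restrict the sequence
$$0 \longrightarrow \omega_C(-\xi) \longrightarrow \omega_C \longrightarrow \omega_C|_\xi \longrightarrow 0$$
to cohomology. Since $\xi$ is zero-dimensional one has $H^1(C,\omega_C|_\xi)=0$, so the cokernel of the restriction map $r\colon H^0(C,\omega_C)\to H^0(C,\omega_C|_\xi)$ is isomorphic to the kernel of the induced map $H^1(C,\omega_C(-\xi))\to H^1(C,\omega_C)$, and this induced map is surjective onto the one-dimensional space $H^1(C,\omega_C)$. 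Hence $\corank(r)=h^1(C,\omega_C(-\xi))-1$. Taking $w=q-1$ in the definition, condition (3) requires $\corank(r)\le q-1$, i.e. $h^1(C,\omega_C(-\xi))\le q$, for every $\xi$ of degree $p+(q-1)+1=p+q$; by Serre duality this is precisely (2). The step needing genuine care is the surjectivity of $H^1(C,\omega_C(-\xi))\to H^1(C,\omega_C)$: its Serre dual is the inclusion $H^0(C,\sO_C)\hookrightarrow H^0(C,\sO_C(\xi))$ given by multiplication by the section cutting out $\xi$, which is visibly nonzero, so the original map is nonzero and thus surjective.

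Finally, the equivalence (1)$\Leftrightarrow$(3) shows that $\omega_C$ is $(q-1)$-weakly $p$-very ample if and only if $p\le \gamma^q(C)-1$. By Lemma \ref{basicgonseq}(2) we have $\gamma^q(C)\ge\min\{q,g\}\ge 1$ for $q\ge 1$, so the set of admissible $p\ge 0$ is exactly $\{0,1,\ldots,\gamma^q(C)-1\}$. Taking the maximum of this set gives $\gamma^q(C)-1$, hence the first formula, and the least $p$ at which weak very ampleness fails is $\gamma^q(C)$, hence the second. I expect the corank identification in the previous paragraph to be the main obstacle, everything else being formal; in particular one should double-check that the corank is taken as the dimension of the cokernel of $r$ so that the bookkeeping matches the index shift $w=q-1$.
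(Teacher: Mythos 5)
Your proof is correct and is exactly the direct verification from the definitions that the paper has in mind (its own proof is just the one line ``It is clear from the definitions''): the reformulation of $\gamma^q$ via line bundles with $q+1$ sections, the long exact sequence computing $\corank(r)=h^1(C,\omega_C(-\xi))-1$, and Serre duality. All steps, including the padding to degree exactly $p+q$ and the surjectivity of $H^1(C,\omega_C(-\xi))\to H^1(C,\omega_C)$, check out.
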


\begin{proof}
It is clear from the definitions.
\end{proof}

\begin{Lem}\label{exactdivisor}
If $\omega_C$ is not $w$-weakly $p$-very ample with $0\leq p\leq g$, then there is an effective divisor $\xi$ of degree $p+w+1$ such that
$$
\corank \big(H^0(C,\omega_C) \longrightarrow H^0(C,\omega_C|_\xi) \big)=w+1,
$$
i.e., $h^0(C, \omega_C(-\xi))=g-p$ and $h^1(C,\omega_C(-\xi))=w+2$.
\end{Lem}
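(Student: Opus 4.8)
The plan is to first translate the corank condition into a statement about $h^0$ of line bundles, and then to produce the required divisor by a ``moving points'' argument. For an effective divisor $\xi$ of degree $p+w+1$, the exact sequence $0 \to \omega_C(-\xi) \to \omega_C \to \omega_C|_\xi \to 0$ together with $h^1(C,\omega_C|_\xi)=0$ shows that the cokernel of $H^0(C,\omega_C) \to H^0(C,\omega_C|_\xi)$ is the kernel of $H^1(C,\omega_C(-\xi)) \to H^1(C,\omega_C)$. Since $H^1(C,\omega_C) \cong \kk$ and this last map is surjective (it is Serre-dual to the inclusion of constants $H^0(C,\sO_C) \hookrightarrow H^0(C,\sO_C(\xi))$, which is injective), I obtain
$$
\corank\big(H^0(C,\omega_C) \to H^0(C,\omega_C|_\xi)\big) = h^1(C,\omega_C(-\xi)) - 1 = h^0(C,\sO_C(\xi)) - 1,
$$
the last equality by Serre duality. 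Hence the hypothesis provides an effective divisor $\xi_0$ of degree $d:=p+w+1$ with $h^0(C,\sO_C(\xi_0)) \geq w+2$, and it suffices to produce a line bundle $N$ of degree $d$ with $h^0(C,N)=w+2$ exactly: any $\xi \in |N|$ then satisfies $h^1(C,\omega_C(-\xi))=w+2$, i.e. $\corank = w+1$, and Riemann--Roch forces $h^0(C,\omega_C(-\xi))=g-p$.

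The heart of the argument is a lemma that lowers $h^0$ by one while preserving the degree. Suppose $N$ has degree $d$ and $h^0(C,N)=n \geq w+3$. For general $x \in C$ the point $x$ is not a base point of $|N|$, so $M:=N(-x)$ satisfies $h^0(C,M)=n-1$, with $\deg M = d-1$. Riemann--Roch gives $h^1(C,M) = n - d - 1 + g \geq (w+3)-(p+w+1)-1+g = g-p+1 \geq 1$, where the hypothesis $p \leq g$ is used. Because $M$ is thus special, adding a general point leaves $h^0$ unchanged: for general $y \in C$ one has $h^1(C,M(y)) = h^0(C,\omega_C \otimes M^{-1}(-y)) = h^1(C,M)-1$, whence $h^0(C,M(y)) = h^0(C,M) = n-1$ by Riemann--Roch. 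Therefore $N':=N(-x+y)$ has degree $d$ and $h^0(C,N')=n-1$.

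Starting from $N_0=\sO_C(\xi_0)$ and iterating this step as long as $h^0 \geq w+3$, I reach after finitely many steps a line bundle $N$ of degree $d$ with $h^0(C,N)=w+2$, and any effective $\xi \in |N|$ completes the proof. The step that needs genuine care --- and the one I expect to be the main obstacle --- is ensuring that the ``add a general point'' move does not raise $h^0$ back up, i.e. that $h^1(C,M) \geq 1$ holds at every stage; this is precisely where the hypothesis $p \leq g$ enters, and it is what guarantees the process terminates exactly at $w+2$ rather than stalling above it. (For $p=0$ the hypothesis is in fact vacuous, since $h^0(C,\sO_C(\xi)) \leq w+1$ for any divisor of degree $w+1$ on a curve of genus $\geq 1$, so one may assume $p \geq 1$, in which case $w+2 \leq d$ makes the target value dimensionally admissible.)
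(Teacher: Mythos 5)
Your proposal is correct and follows essentially the same route as the paper: both arguments iteratively replace a general point of the divisor by another general point to drop $h^0(C,\sO_C(\xi))=h^1(C,\omega_C(-\xi))$ by exactly one per step, using Riemann--Roch together with $p\leq g$ to guarantee the relevant line bundle remains both effective and special so the process terminates precisely at the target value. The only cosmetic difference is that you run the argument on $\sO_C(\xi)$ where the paper runs it on the Serre-dual bundle $\omega_C(-\xi)$.
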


\begin{proof}
Since $\omega_C$ is not $w$-weakly $p$-very ample, there is an effective divisor $\xi_0$ of degree $p+w+1$  on $C$ such that
$$
h^1(C, \omega_C(-\xi_0))\geq w+2.
$$
If $h^1(C, \omega_C(-\xi_0))=w+2$, then we are done by taking $\xi=\xi_0$.
Suppose that $h^1(C, \omega_C(-\xi_0))\geq w+3$. The Riemann--Roch theorem yields
$$
h^0(C, \omega_C(-\xi_0))= g-p+h^1(C, \omega_C(-\xi_0)) - w-2 \geq 1.
$$
It is elementary to see that if $B$ is a line bundle on $C$ with $H^0(C, B)\neq0$ and $H^1(C, B)\neq0$, then
$$
h^0(C, B(-x_0+x_1))=h^0(C, B)-1~~\text{ and }~~h^1(C, B(-x_0+x_1))=h^1(C, B)-1
$$
for general points $x_0, x_1 \in C$. Thus we find
$$
h^0(C, \sO_C(\xi_0 + x_0 - x_1)) = h^1(C, \omega_C(-\xi_0 - x_0 + x_1)) = h^1(C, \omega_C(-\xi_0)) -1 \geq w+2,
$$
so we can choose an effective divisor $\xi_1\in|\xi_0+x_0-x_1|$ of degree $p+w+1$. Then
$$
h^1(C, \omega_C(-\xi_1)) = h^1(C, \omega_C(-\xi_0)) -1.
$$ 
Continuing this process, we finally reach an effective divisor $\xi$ of degree $p+w+1$ such that
$h^1(C, \omega_C(-\xi))=w+2$.
\end{proof}

\section{Symmetric Products and Secant Varieties of Curves}\label{secsym}

In this section, we review basic properties of symmetric products and secant varieties of smooth projective curves, and we show some useful lemmas for the proof of Theorem \ref{main}. We refer to \cite{Bertram} and \cite{ENP1} for a more detailed account.

\medskip

Let $C$ be a smooth projective curve of genus $g$. For an integer $k \geq 1$, we write the $k$-th symmetric product of the curve $C$ as $C_k$ and the $k$-th ordinary product of the curve $C$ as $C^k$. The symmetric group $\mathfrak{S}_k$ naturally acts on $C^k$, and $C_k=C^k/\mathfrak{S}_k$. We have the quotient morphism
$$
q_k \colon C^k \longrightarrow C_k,~~(x_1, \ldots, x_k) \longmapsto x_1 + \cdots + x_k,
$$
which is a finite flat surjective morphism of degree $k!$. For a line bundle $L$ on $C$, there are two line bundles $S_{k,L}$ and $N_{k,L}$ on $C_k$ such that 
$$
q_k^* S_{k,L} = L^{\boxtimes k} = \underbrace{L \boxtimes \cdots \boxtimes L}_{k~\text{times}}~~\text{ and }~~q_k^* N_{k,L} = L^{\boxtimes k}\big( -\sum_{1 \leq i < j \leq k} \Delta_{i,j} \big),
$$ 
where $\Delta_{i,j}:=\{(x_1, \ldots, x_k) \in C^k \mid x_i = x_j\}$ is a pairwise diagonal. Let $\delta_k$ be a divisor on $C_k$ such that $\sO_{C_k}(\delta_k) = S_{k,\sO_C} \otimes N_{k,\sO_C}^{-1}$. Then $N_{k,L} = S_{k,L}(-\delta_k)$ for any line bundle $L$ on $C$. It is well known that
$$
H^0(C_k, S_{k,L}) = S^k H^0(C, L)~~\text{ and }~~H^0(C_k, N_{k,L}) = \A{k} H^0(C, L).
$$
Furthermore, we have the following.

\begin{Lem}[{\cite[Lemma 2.4]{Agostini}, \cite[Lemma 3.7]{ENP1}}]\label{H^iS_k,N_k}
We have
\begin{align*}
&H^i(C_k,S_{k,L})=S^{k-i}H^0(C,L)\otimes\A{i}H^1(C,L)~~\text{ for $i \geq 0$};\\
&H^i(C_k,N_{k,L})=\A{k-i}H^0(C,L)\otimes S^i H^1(C,L)~~\text{ for $i \geq 0$.}
\end{align*}
\end{Lem}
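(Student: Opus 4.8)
The plan is to compute the cohomology groups $H^i(C_k, S_{k,L})$ and $H^i(C_k, N_{k,L})$ by pulling back along the quotient morphism $q_k \colon C^k \to C_k$ and extracting the appropriate isotypic component for the $\mathfrak{S}_k$-action. Since $q_k$ is finite and flat of degree $k!$, and we work in characteristic zero, the functor of $\mathfrak{S}_k$-invariants is exact, so for any line bundle $F$ on $C_k$ we have the identification
\begin{equation*}
H^i(C_k, F) = H^i\big(C^k, q_k^* F\big)^{\mathfrak{S}_k},
\end{equation*}
where the invariants are taken with respect to the natural $\mathfrak{S}_k$-action on the cohomology of the pullback. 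The first step is therefore to apply the Künneth formula to $q_k^* S_{k,L} = L^{\boxtimes k}$, which gives
\begin{equation*}
H^i(C^k, L^{\boxtimes k}) = \bigoplus_{i_1 + \cdots + i_k = i} H^{i_1}(C, L) \otimes \cdots \otimes H^{i_k}(C, L).
\end{equation*}
Since $C$ is a curve, each factor $H^{i_j}(C, L)$ is nonzero only for $i_j \in \{0, 1\}$, so the sum runs over tuples in which exactly $i$ of the indices equal $1$ and the remaining $k-i$ equal $0$; up to reordering, every summand is a tensor product of $(k-i)$ copies of $H^0(C,L)$ and $i$ copies of $H^1(C,L)$.

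Next I would identify the $\mathfrak{S}_k$-action on this Künneth decomposition. The symmetric group permutes the tensor factors, and because the cohomological degrees $0$ and $1$ carry opposite Koszul signs under transposition of factors, the group acts on the $i$-many degree-one slots by the sign character and on the $(k-i)$-many degree-zero slots trivially. Concretely, the space $H^i(C^k, L^{\boxtimes k})$ is the $\mathfrak{S}_k$-representation induced from the one-dimensional representation of $\mathfrak{S}_{k-i} \times \mathfrak{S}_i$ that is trivial on $\mathfrak{S}_{k-i}$ and sign on $\mathfrak{S}_i$, tensored with the spaces $H^0(C,L)^{\otimes(k-i)} \otimes H^1(C,L)^{\otimes i}$. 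Taking $\mathfrak{S}_k$-invariants then symmetrizes the $(k-i)$ degree-zero factors and antisymmetrizes the $i$ degree-one factors, which yields exactly
\begin{equation*}
H^i(C_k, S_{k,L}) = S^{k-i} H^0(C,L) \otimes \A{i} H^1(C,L),
\end{equation*}
as claimed. For the second statement, the only change is that $q_k^* N_{k,L} = L^{\boxtimes k}(-\sum_{i<j}\Delta_{i,j})$ twists by the pairwise diagonals; this diagonal divisor is $\mathfrak{S}_k$-anti-invariant, so its effect is to tensor the sign character into the action. The consequence is a reversal of the roles of symmetric and exterior powers, giving $\A{k-i} H^0(C,L) \otimes S^i H^1(C,L)$, which matches the second formula.

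The main obstacle is pinning down the precise sign with which $\mathfrak{S}_k$ acts on the Künneth summands and on the sheaf $N_{k,L}$, since everything hinges on whether a transposition contributes a trivial or a sign character. The cleanest way to handle this is to invoke the cited references \cite[Lemma 2.4]{Agostini} and \cite[Lemma 3.7]{ENP1}, where this bookkeeping is carried out in detail; alternatively, one verifies the sign by a careful application of the Koszul sign rule, checking that swapping two degree-one cohomology classes on $C$ introduces a factor of $-1$ in the Künneth isomorphism, while swapping a degree-zero class past anything is sign-neutral. Once the character of the $\mathfrak{S}_k$-representation is correctly identified, the passage to invariants is a routine application of the relation between invariants under the trivial character (symmetric powers) and under the sign character (exterior powers), and no further geometric input is needed.
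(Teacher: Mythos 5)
The paper does not actually prove this lemma; it is quoted verbatim from \cite[Lemma 2.4]{Agostini} and \cite[Lemma 3.7]{ENP1}, so there is no in-paper argument to compare against. Your sketch is exactly the standard proof that those references carry out: identify $H^i(C_k,-)$ with an isotypic component of $H^i(C^k,L^{\boxtimes k})$ via the finite flat quotient $q_k$ (using exactness of $\mathfrak{S}_k$-invariants in characteristic zero), apply the K\"unneth formula, and take graded-symmetric, respectively graded-alternating, invariants with the Koszul sign rule, which yields $S^{k-i}H^0(C,L)\otimes\A{i}H^1(C,L)$ and $\A{k-i}H^0(C,L)\otimes S^iH^1(C,L)$. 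The only imprecision is your claim that the diagonal divisor is ``$\mathfrak{S}_k$-anti-invariant'': the divisor $\sum_{i<j}\Delta_{i,j}$ is invariant as a divisor; what is anti-invariant is a local defining equation (e.g.\ $x_i-x_j$ transverse to $\Delta_{i,j}$), and this is precisely why the linearization on $L^{\boxtimes k}(-\Delta)$ that descends to $N_{k,L}$ differs by the sign character from the one induced by the inclusion into $L^{\boxtimes k}$. The cleanest way to package this is to decompose $q_{k,*}L^{\boxtimes k}$ into $\mathfrak{S}_k$-isotypic summands and verify that the trivial and sign components are $S_{k,L}$ and $N_{k,L}$ respectively (an anti-invariant local section of $L^{\boxtimes k}$ automatically vanishes along each $\Delta_{i,j}$, and the transverse local computation shows the sign component is exactly the descent of $L^{\boxtimes k}(-\Delta)$); with that bookkeeping made explicit, the rest of your argument is routine and correct.
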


Let $D_{k,m}$ be the effective divisor on $C_k \times C_m$ given by the image of the map
$$
C_{k-1} \times C \times C_{m-1} \longrightarrow C_k \times C_m,~~(\xi_1, x, \xi_2) \longmapsto (\xi_1+x, \xi_2+x),
$$
and $\pr_1 \colon C_k \times C_m \to C_k,~\pr_2 \colon C_k \times C_m \to C_m$ be the projections. 
For $\xi \in C_k$, we set $C_{m, \xi}:=\pr_1^{-1}(\xi) = \{ \xi \} \times C_m$. Then $\sO_{C_m}(C_{m, \xi} \cap D_{k,m}) = S_{m, \sO_C(\xi)}$.
For a coherent sheaf $\sF$ on $C_m$, we put
$$
M_k^i \sF:=R^i \pr_{1,*} (\sO_{C_k} \boxtimes \sF )(-D_{k,m})~~\text{ for $i \geq 0$}.
$$
It is a coherent sheaf on $C_k$. Identifying $C_{m,\xi}=C_m$ for each $\xi \in C_k$, we have a natural map
$$
\rho^i(\xi) \colon M_k^i \sF \otimes \kk(\xi) \longrightarrow H^i(C_m, \sF \otimes S_{m, \sO_C(-\xi)}).
$$
Suppose that $\sF$ is flat over $C_k$. By Grauert's theorem, when $h^i(C_m, \sF \otimes S_{m, \sO_C(-\xi)})$ is constant for all $\xi \in C_k$, $M_k^i \sF$ is a vector bundle and $\rho^i(\xi)$ is an isomorphism . 
By the cohomology and base change, when $\rho^{i+1}(\xi)$ is surjective for $\xi \in C_k$, $\rho^i(\xi)$ is an isomorphism if and only if $M_k^{i+1} \sF$ is locally free in a neighborhood of $\xi \in C_k$.

\begin{Lem}[{cf. \cite[Lemma 1.2]{EL3}}]\label{Fujita}
For a given coherent sheaf $\sF$ on $C_m$, if $\deg L$ is sufficiently large, then
$$
H^i(C_k, M_k^j \sF \otimes N_{k,L})=0~~\text{ for $i>0$ and $j \geq 0$}.
$$
\end{Lem}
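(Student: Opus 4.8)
The plan is to reduce the statement to the classical Fujita vanishing theorem. The crucial first observation is that the sheaf $G := M_k^j \sF = R^j \pr_{1,*}(\sO_{C_k} \boxtimes \sF)(-D_{k,m})$ does \emph{not} depend on $L$: it is a fixed coherent sheaf on $C_k$ (coherent because $\pr_1$ is proper, as noted after its definition), and since the fibers of $\pr_1$ are isomorphic to $C_m$ we have $M_k^j \sF = 0$ for $j > m$, so only the finitely many indices $0 \le j \le m$ need to be treated. Thus the content of the lemma is a Fujita-type statement, namely $H^i(C_k, G \otimes N_{k,L}) = 0$ for $i > 0$ once $\deg L$ is large, for a \emph{fixed} coherent sheaf $G$. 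The difficulty is that $N_{k,L}$ is not a single fixed ample bundle raised to a growing power, so Fujita vanishing cannot be applied verbatim on $C_k$; as $\deg L$ varies, $N_{k,L}$ moves through a positive but non-multiplicative family.

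To circumvent this I would pull everything back along the finite flat quotient $q_k \colon C^k \to C_k$. Working in characteristic zero, the trace map splits $\sO_{C_k}$ off $q_{k,*}\sO_{C^k}$ (the composite $\sO_{C_k} \to q_{k,*}\sO_{C^k} \to \sO_{C_k}$ is multiplication by $k!$), so for any sheaf $\mathcal{E}$ on $C_k$ the space $H^i(C_k, \mathcal{E})$ is a direct summand of $H^i(C^k, q_k^* \mathcal{E})$. Taking $\mathcal{E} = G \otimes N_{k,L}$ and using $q_k^* N_{k,L} = L^{\boxtimes k}(-\Delta)$, it therefore suffices to prove
$$
H^i\big(C^k,\, G' \otimes L^{\boxtimes k}\big) = 0 \quad \text{for } i > 0,
$$
where $G' := q_k^* G \otimes \sO_{C^k}(-\Delta)$ is again a fixed coherent sheaf on $C^k$.

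The final step is where the box-product structure makes Fujita vanishing applicable. I would fix an ample line bundle $A$ on $C$ of degree $1$; then $A^{\boxtimes k}$ is ample on $C^k$, and Fujita's theorem provides an integer $m_0$, depending only on $G'$ and $A^{\boxtimes k}$, such that $H^i(C^k, G' \otimes (A^{\boxtimes k})^{\otimes m} \otimes P) = 0$ for all $i > 0$, all $m \ge m_0$, and all nef line bundles $P$ on $C^k$. Now if $\deg L \ge m_0$, then $L \otimes A^{-m_0}$ has nonnegative degree, hence is nef on the curve $C$, so $(L \otimes A^{-m_0})^{\boxtimes k}$ is nef on $C^k$; and by multiplicativity of external tensor products one has the factorization $L^{\boxtimes k} = (A^{\boxtimes k})^{\otimes m_0} \otimes (L \otimes A^{-m_0})^{\boxtimes k}$. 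Applying Fujita vanishing with this nef twist yields the desired vanishing on $C^k$, and hence on $C_k$ by the direct-summand property. Taking the maximum of the thresholds $m_0$ over the finitely many $j$ gives a single bound, making the phrase ``$\deg L$ sufficiently large'' precise. I expect the only genuine subtlety to be the reduction to $C^k$ together with the insistence on the nef-twist form of Fujita vanishing: plain Serre vanishing is insufficient because it controls only the powers of one fixed bundle, whereas the conclusion must hold for \emph{every} $L$ of large degree.
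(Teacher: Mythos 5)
Your proof is correct, and it runs on the same engine as the paper's: both arguments split the moving positive line bundle into a large power of a \emph{fixed} ample line bundle tensored with a nef line bundle, and then invoke Fujita--Serre vanishing for the fixed coherent sheaf $M_k^j\sF$ (which, as you rightly stress, does not depend on $L$ and vanishes for $j>m$). The difference is in where the decomposition is performed. The paper stays on $C_k$: it writes $L=L'\otimes\sO_C(mx)$ with $m\gg0$, so that $N_{k,L}=N_{k,L'}\otimes S_{k,\sO_C(x)}^{m}$, and applies Fujita vanishing on $C_k$ using two positivity facts about symmetric products, namely that $S_{k,\sO_C(x)}$ is ample and that $N_{k,L'}$ is nef once $\deg L'$ is large. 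You instead transfer the cohomology to the ordinary product $C^k$ via the trace splitting of the finite flat quotient $q_k$ (legitimate in characteristic zero, and the projection formula you need holds for any coherent sheaf since $q_k$ is affine), absorb $\sO_{C^k}(-\Delta)$ into the fixed coherent sheaf, and then only need the elementary facts that box products of ample (resp.\ nef) line bundles on $C$ are ample (resp.\ nef) on $C^k$. Your route is slightly longer but more self-contained, avoiding any positivity statements about $N_{k,L}$ and $S_{k,\sO_C(x)}$ on the symmetric product itself; the paper's is shorter at the cost of quoting those facts. Both yield the same qualitative conclusion, and your bookkeeping of the finitely many thresholds over $0\le j\le m$ correctly pins down the meaning of ``$\deg L$ sufficiently large.''
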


\begin{proof}
As $\deg L \gg 0$, we may write $L=L' \otimes \sO_C(mx)$ for a point $x \in C$ and a sufficiently large integer $m \gg 0$ such that $N_{k,L'}$ is nef. Then $N_{k,L}=N_{k,L'} \otimes S_{k, \sO_C(x)}^{m}$. Since $S_{k, \sO_C(x)}$ is ample and $m$ is sufficiently large, the lemma follows from Fujita--Serre vanishing \cite[Theorem 1.4.35]{positivity}.
\end{proof}

In the above situation, we now consider the case $m=1$. Then $D_{k,1}$ is the image of the injective map
$$
C_{k-1} \times C \longrightarrow C_k \times C,~~(\xi, x) \longmapsto (\xi+x, x).
$$
Let $\sigma_k:=\pr_1|_{D_{k,1}}$. Identifying $D_{k,1}$ with $C_{k-1} \times C$, we obtain a map
$$
\sigma_{k} \colon C_{k-1} \times C \longrightarrow C_{k},~~(\xi, x) \longmapsto \xi+x,
$$
which is a finite flat surjective morphism of degree $k$. If we view $C_{k}$ as the Hilbert scheme of $k$ points on $C$, then $\sigma_k$ is the universal family.
The \emph{tautological bundle} on $C_k$ associated to $L$ is defined to be
$$
E_{k,L} := \sigma_{k,*} (\sO_{C_{k-1}} \boxtimes L). 
$$
It is a vector bundle of rank $k$ on $C_k$. Note that $H^0(C, E_{k,L}) = H^0(C, L)$ and $\det E_{k,L} = N_{k,L}$. Suppose that $L$ is $(k-1)$-very ample. Then $E_{k,L}$ is globally generated. Applying $\pr_{1,*}$ to the short exact sequence
$$
\begin{tikzcd}
0 \ar[r]  & (\sO_{C_k} \boxtimes L)(-D_{k,1}) \ar[r,"\cdot D_{k,1}"] & \sO_{C_k} \boxtimes L  \ar[r] & \sO_{C_{k-1}} \boxtimes L \ar[r] & 0,
\end{tikzcd}
$$
we get a short exact sequence
$$
\begin{tikzcd}
0 \ar[r]  & M_{E_{k,L}} \ar[r] & H^0(C, L) \otimes \sO_{C_k}  \ar[r,"\operatorname{ev}"] & E_{k,L} \ar[r] & 0.
\end{tikzcd}
$$
Notice that $M_{E_{k,L}} = M_k^0 L$ is a vector bundle of rank $h^0(C, L) - k$ on $C_k$. This short exact sequence looks like
$$
\begin{tikzcd}
0 \ar[r]  & H^0(C, L(-\xi)) \ar[r] & H^0(C, L)  \ar[r] & H^0(C, L|_{\xi}) \ar[r] & 0
\end{tikzcd}
$$
over $\xi \in C_k$ fiberwisely.

\begin{Lem}\label{wedgeM_{E_{k,L}}}
Suppose that $\deg L \geq 2g+k-1$. Then
$$
M^i_k N_{m,L} = \begin{cases} \A{m} M_{E_{k,L}} & \text{if $i=0$} \\ 0 & \text{if $i>0$}. \end{cases}
$$
In particular, for any line bundle $B$ on $C_k$, we have
$$
H^i(C_k \times C_m, (B \boxtimes N_{m, L})(-D_{k,m})) = H^i(C_k, \A{m} M_{E_{k,L}} \otimes B)~~\text{ for $i\geq 0$}.
$$
\end{Lem}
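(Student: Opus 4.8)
The plan is to compute the sheaves $M_k^i N_{m,L}$ fiber by fiber over $C_k$ via cohomology and base change, and then to identify the zeroth direct image with $\A{m}M_{E_{k,L}}$ through an explicit natural map. Fix $\xi\in C_k$. Since $\sO_{C_m}(C_{m,\xi}\cap D_{k,m}) = S_{m,\sO_C(\xi)}$, restricting $(\sO_{C_k}\boxtimes N_{m,L})(-D_{k,m})$ to the fiber $C_{m,\xi}\cong C_m$ yields $N_{m,L}\otimes S_{m,\sO_C(-\xi)} = N_{m,L(-\xi)}$ (using $S_{m,L}\otimes S_{m,\sO_C(-\xi)} = S_{m,L(-\xi)}$ and $N_{m,\cdot}=S_{m,\cdot}(-\delta_m)$), so the map $\rho^i(\xi)$ takes values in $H^i(C_m, N_{m,L(-\xi)})$. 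By Lemma~\ref{H^iS_k,N_k} this equals $\A{m-i}H^0(C,L(-\xi))\otimes S^iH^1(C,L(-\xi))$; as $\deg L(-\xi) = \deg L - k\ge 2g-1$ forces $H^1(C,L(-\xi))=0$, it vanishes for $i>0$ and equals $\A{m}H^0(C,L(-\xi))$ for $i=0$, of dimension $\binom{\deg L - k+1-g}{m}$ independent of $\xi$. Grauert's theorem, as recalled just before the statement, then gives $M_k^iN_{m,L}=0$ for $i>0$ and shows that $M_k^0N_{m,L}$ is a vector bundle with every $\rho^0(\xi)$ an isomorphism onto $\A{m}H^0(C,L(-\xi))$.

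To identify $M_k^0 N_{m,L}$ with $\A{m}M_{E_{k,L}}$, I would construct a natural map $\Phi\colon \A{m}M_{E_{k,L}}\to M_k^0 N_{m,L}$. Recall $M_{E_{k,L}} = M_k^0 L = \pr_{1,*}\big((\sO_{C_k}\boxtimes L)(-D_{k,1})\big)$, so a local section is a section $f(\xi,x)$ of $\sO_{C_k}\boxtimes L$ vanishing along $D_{k,1}$. Given $m$ such sections $f_1,\dots,f_m$, the determinant $(x_1,\dots,x_m)\mapsto\det\big(f_i(\xi,x_j)\big)_{i,j}$ — the relative-over-$C_k$ form of the classical realization of $\A{m}H^0(C,L)=H^0(C_m,N_{m,L})$ — is alternating in the $x_j$, hence descends to a section of $\sO_{C_k}\boxtimes N_{m,L}$, and it vanishes along $D_{k,m}$ because the $j$-th column vanishes whenever $x_j$ meets $\xi$ (using the standard relation $(\id\times q_m)^*D_{k,m}=\sum_{j=1}^m D_{k,1}^{(j)}$, with $D_{k,1}^{(j)}$ the pullback of $D_{k,1}$ via the projection to $C_k$ and the $j$-th factor of $C^m$). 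Being alternating and $\sO_{C_k}$-multilinear in the $f_i$, this construction factors through $\A{m}M_{E_{k,L}}$ and defines $\Phi$. Composing $\Phi\otimes\kk(\xi)$ with the isomorphism $\rho^0(\xi)$ recovers the canonical wedge isomorphism $\A{m}H^0(C,L(-\xi))\xrightarrow{\sim}H^0(C_m,N_{m,L(-\xi)})$ of Lemma~\ref{H^iS_k,N_k}, so $\Phi$ is an isomorphism on each fiber; since both sheaves are vector bundles of the same rank $\binom{\deg L-k+1-g}{m}$, it follows that $\Phi$ is an isomorphism. The main obstacle, which I would treat with care, is precisely the well-definedness of $\Phi$: that the determinant descends to the symmetric product and vanishes to the correct order along $D_{k,m}$, which rests on the divisor identity above and on the $\mathfrak{S}_m$-equivariance of the whole picture, together with the verification that $\Phi$ induces the canonical identification on fibers (a naturality-in-$L$ statement for $H^0(C_m,N_{m,L})=\A{m}H^0(C,L)$).

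Finally, the ``In particular'' clause is formal. For a line bundle $B$ on $C_k$ one has $(B\boxtimes N_{m,L})(-D_{k,m})=\pr_1^*B\otimes(\sO_{C_k}\boxtimes N_{m,L})(-D_{k,m})$, so the projection formula gives $R^i\pr_{1,*}\big((B\boxtimes N_{m,L})(-D_{k,m})\big)=B\otimes M_k^iN_{m,L}$, which vanishes for $i>0$ and equals $\A{m}M_{E_{k,L}}\otimes B$ for $i=0$. As the higher direct images vanish, the Leray spectral sequence for $\pr_1$ degenerates and yields $H^i(C_k\times C_m,(B\boxtimes N_{m,L})(-D_{k,m}))=H^i(C_k,\A{m}M_{E_{k,L}}\otimes B)$ for all $i\ge0$, as claimed.
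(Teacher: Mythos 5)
Your proof is correct, and it coincides with the paper's on both the cohomological part and the formal ``in particular'' part: the paper likewise computes $H^i(C_m,N_{m,L(-\xi)})=\A{m-i}H^0(C,L(-\xi))\otimes S^iH^1(C,L(-\xi))$ via Lemma \ref{H^iS_k,N_k}, kills the $i>0$ terms using $\deg L(-\xi)\ge 2g-1$, and finishes with the projection formula and Leray. The one place you diverge is the identification $M^0_kN_{m,L}=\A{m}M_{E_{k,L}}$. You build an explicit map $\Phi\colon \A{m}M_{E_{k,L}}\to M^0_kN_{m,L}$ by a relative determinant construction and check it is a fiberwise isomorphism; this works, but it forces you to carry the burden you yourself flag — descent of the determinant through $q_m$, the divisor identity $(\id\times q_m)^*D_{k,m}=\sum_j D_{k,1}^{(j)}$, and the naturality of the identification $\A{m}H^0=H^0(N_m)$ on fibers. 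The paper sidesteps constructing any map: applying $\pr_{1,*}$ to the inclusion $(\sO_{C_k}\boxtimes N_{m,L})(-D_{k,m})\hookrightarrow \sO_{C_k}\boxtimes N_{m,L}$ embeds $M^0_kN_{m,L}$ into the trivial bundle $\A{m}H^0(C,L)\otimes\sO_{C_k}$, while $\A{m}$ of the inclusion $M_{E_{k,L}}\hookrightarrow H^0(C,L)\otimes\sO_{C_k}$ embeds $\A{m}M_{E_{k,L}}$ into the same trivial bundle; both subsheaves have fiber $\A{m}H^0(C,L(-\xi))$ over every $\xi$, so they are equal. That argument is shorter and avoids all descent bookkeeping, at the cost of being less explicit; your version has the advantage of exhibiting the isomorphism concretely, which could be useful if one later needs to trace specific Koszul classes through it.
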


\begin{proof}
By Lemma \ref{H^iS_k,N_k},
$$
H^i(C_m, N_{m, L(-\xi)}) = \A{m-i} H^0(C, L(-\xi)) \otimes S^i H^1(C, L(-\xi)).
$$
for any $\xi \in C_k$. Since $\deg L(-\xi) \geq 2g-1$, it follows that $H^1(C, L(-\xi))=0$. Thus we get $H^i(C_m, N_{m, L(-\xi)}) =0$ for $i>0$, so we obtain $M^i_k N_{m,L} = 0$ for $i>0$. Note that $M^0_k N_{m,L}$ is a vector bundle on $C_k$ whose fiber is $\A{m} H^0(C, L(-\xi))$ over $\xi \in C_k$. Applying $\pr_{1,*}$ to the injective map
$$
(\sO_{C_k} \boxtimes N_{m,L})(-D_{k,m}) \longhookrightarrow \sO_{C_k} \boxtimes N_{m,L},
$$
we get an injective map
$$
M^0_k N_{m,L} \longhookrightarrow \A{m} H^0(C, L) \otimes \sO_{C_k},
$$
which looks like
$$
\A{m} H^0(C, L(-\xi)) \longhookrightarrow \A{m} H^0(C, L)
$$
over $\xi \in C_k$ fiberwisely. On the other hand, notice that $L$ is $(k-1)$-very ample. The injective map 
$$
M_{E_{k,L}} \longhookrightarrow H^0(C, L) \otimes \sO_{C_k}
$$ 
induces an injective map
$$
\A{m} M_{E_{k,L}}  \longhookrightarrow \A{m} H^0(C, L) \otimes \sO_{C_k},
$$
which looks like
$$
\A{m} H^0(C, L(-\xi)) \longhookrightarrow \A{m} H^0(C, L)
$$
over $\xi \in C_k$ fiberwisely. Thus we can conclude that $M^0_k N_{m,L} = \A{m} M_{E_{k,L}}$. Now, the second statement follows from the projection formula and the Leray spectral sequence for $\pr_1$. 
\end{proof}

From now on, as in \cite{Bertram} and \cite{ENP1}, suppose that 
$$
\deg L \geq 2g+2k+1.
$$
For an integer $k \geq 0$, let
$$
B_k=B_k(L):=\nP (E_{k+1, L})
$$
with the canonical projection $\pi_k \colon B_k \to C_{k+1}$, and $H_k$ be a tautological divisor so that $\sO_{B_k}(H_k) = \sO_{\nP (E_{k+1, L})}(1)$. As $E_{k+1, L}$ is globally generated, $H_k$ is base point free. Note that 
$$
H^0(B_k, H_k) = H^0(C_{k+1}, E_{k+1, L}) = H^0(C, L).
$$
The image of the morphism given by the complete linear system $|H_k|$ is the \emph{$k$-th secant variety} $\Sigma_k$ of $C$ in $\nP H^0(C, L)= \nP^r$. Denote the induced map by
$$
\beta_k \colon B_k \longrightarrow \Sigma_k,
$$
which is a resolution of singularities. By \cite[Theorem 1.1]{ENP1}, $\Sigma_k$ has normal Du Bois singularities, and in particular,
$$
\beta_{k,*} \sO_{B_k} = \sO_{\Sigma_k}.
$$ 
By \cite[Theorem 1.2]{ENP2}, $\Sigma_k \subseteq \nP^r$ is arithmetically Cohen--Macaulay, and $H^{2k+1}(\Sigma_k, \sO_{\Sigma_k}(m))=0$ for $m>0$.
Note that $\sO_{B_k}(H_k) = \beta^* \sO_{\Sigma_k}(1)$. Put $M_{H_k}:=\beta_k^* M_{\sO_{\Sigma_k}(1)}$, which fits into a short exact sequence
$$
\begin{tikzcd}
0 \ar[r]  & M_{H_k} \ar[r] & H^0(C, L) \otimes \sO_{B_k} \ar[r,"\operatorname{ev}"] & \sO_{B_k}(H_k) \ar[r] & 0.
\end{tikzcd}
$$
Set $Z_{k-1}:=\beta_k^{-1}(\Sigma_{k-1})$, which is an irreducible effective divisor on $B_k$. Then $\beta_{k,*} \sO_{B_k}(-Z_{k-1}) = \sI_{\Sigma_{k-1}|\Sigma_k}$. We have a commutative diagram
$$
\begin{tikzcd}[column sep=1.5cm] 
			& B_{k} \ar[dl,"\pi_k", swap] \ar[dr,"\beta_{k}"] 	& Z_{k-1} \ar[l, hook'] \ar[dr] \\
C_{k+1} 	& 										& \Sigma_{k} 							& \Sigma_{k-1}. \ar[l, hook'] 	
\end{tikzcd}
$$
Notice that $\omega_{C_{k+1}} = N_{k+1, \omega_C}$. Then we have
$$
\omega_{B_k} = \sO_{B_k}(-(k+1)H_k) \otimes \pi_k^* (\omega_{C_{k+1}} \otimes \det E_{k+1, L}) = \sO_{B_k}(-(k+1)H_k) \otimes \pi_k^* S_{k+1, \omega_C \otimes L}(-2\delta_{k+1}).
$$
We will compute $\omega_{\Sigma_k}$ in Proposition \ref{Ein}. 
On the other hand, the map $\sigma_{k+1} \colon C_k \times C \to C_{k+1}$ provides a morphism $\alpha_{k} \colon B_{k-1} \times C \to B_k$ birational onto its image (see \cite[p.\ 432]{Bertram}). By \cite[Lemma 1.1 $(a)$]{Bertram} (see \cite[Subsection 3.2]{ENP1}), we have a commutative diagram
\begin{equation}\label{additionmapBertram}
\begin{tikzcd}
B_{k-1}\times C \ar[r,"\alpha_{k}"] \ar[d] & B_{k} \ar[d,"\beta_{k}"] \\
\Sigma_{k-1} \ar[r,hook] & \Sigma_{k},
\end{tikzcd}
\end{equation}
where the left vertical map is the first projection followed by $\beta_{k-1}$.

\begin{Prop}[{\cite{ENP1}}]\label{basicpropsecantvar}
We have the following:
\begin{enumerate}
 \item $\sO_{B_k}(Z_{k-1}) = \sO_{B_k}((k+1)H_k) \otimes \pi_k^* S_{k+1,L}(-2\delta_{k+1})^{-1}$ and $\omega_{B_k}(Z_{k-1}) = \pi_k^* S_{k+1, \omega_C}$. 
 \item $R^i \beta_{k,*} \sO_{B_k}(-Z_{k-1}) = \begin{cases} \sI_{\Sigma_{k-1}|\Sigma_k} & \text{if $i=0$} \\ 0 & \text{if $i>0$}.\end{cases}$
 \item $R^i \pi_{k,*} \A{j} M_{H_k} = \begin{cases} \A{j} M_{E_{k+1,L}} & \text{if $i=0$} \\ 0 & \text{if $i>0$}. \end{cases}$
\end{enumerate}
\end{Prop}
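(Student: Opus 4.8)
The plan is to treat the three assertions in increasing order of depth: (1) is divisor-class bookkeeping on the projective bundle, (3) is a relative cohomology computation, and (2) carries all the genuine geometric content, which I expect to be the main obstacle. For (1), the formula for $\omega_{B_k}$ is already recorded just above the statement, so it suffices to compute the class of $Z_{k-1}$ in $\operatorname{Pic}(B_k) = \pi_k^*\operatorname{Pic}(C_{k+1})\oplus\nZ\cdot H_k$ and then combine. Restricting to a general fiber $\nP^k = \pi_k^{-1}(\xi)$, where $\xi = x_1+\cdots+x_{k+1}$ spans the $k$-plane, the trace $Z_{k-1}\cap\nP^k$ is the union of the $k+1$ hyperplanes spanned by the $k$-element subdivisors of $\xi$, so the coefficient of $H_k$ is $k+1$; the horizontal twist is then pinned down by an adjunction computation along $Z_{k-1}$ using the birational parametrization $\alpha_k\colon B_{k-1}\times C\to Z_{k-1}$ of diagram (\ref{additionmapBertram}). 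This gives the first identity, and the second follows by tensoring with the $\omega_{B_k}$ formula: the $\pm 2\delta_{k+1}$ contributions cancel and $S_{k+1,\omega_C\otimes L}\otimes S_{k+1,L}^{-1} = S_{k+1,\omega_C}$, leaving $\omega_{B_k}(Z_{k-1}) = \pi_k^* S_{k+1,\omega_C}$.

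For (3), I would introduce the relative kernel bundle $M_{\pi_k} := \ker\!\big(\pi_k^* E_{k+1,L}\twoheadrightarrow\sO_{B_k}(H_k)\big)$, of rank $k$, and splice the pullback of the defining sequence of $M_{E_{k+1,L}}$ with the relative tautological sequence to obtain
$$
0\longrightarrow\pi_k^* M_{E_{k+1,L}}\longrightarrow M_{H_k}\longrightarrow M_{\pi_k}\longrightarrow 0,
$$
using that $H^0(C,L)\otimes\sO_{B_k}\to\pi_k^* E_{k+1,L}$ is surjective since $L$ is $k$-very ample. Taking $j$-th wedge powers yields a filtration of $\A{j}M_{H_k}$ with graded pieces $\pi_k^*\A{a}M_{E_{k+1,L}}\otimes\A{j-a}M_{\pi_k}$ for $0\leq a\leq j$. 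On each fiber $M_{\pi_k}|_{\nP^k} = \Omega^1_{\nP^k}(1)$, so $\A{b}M_{\pi_k}|_{\nP^k} = \Omega^b_{\nP^k}(b)$, which by the Bott formula has vanishing cohomology in every degree for $1\leq b\leq k$. Hence by cohomology and base change $R^i\pi_{k,*}\A{b}M_{\pi_k} = 0$ for all $i$ when $1\leq b\leq k$, while $\pi_{k,*}\A{0}M_{\pi_k} = \sO_{C_{k+1}}$ with higher direct images vanishing. By the projection formula only the graded piece $a=j$, $b=0$ survives, so the filtration degenerates and yields $\pi_{k,*}\A{j}M_{H_k} = \A{j}M_{E_{k+1,L}}$ with $R^i\pi_{k,*}\A{j}M_{H_k} = 0$ for $i>0$.

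For (2), the case $i=0$ follows from $\beta_{k,*}\sO_{B_k} = \sO_{\Sigma_k}$ (normality, \cite[Theorem 1.1]{ENP1}): pushing forward the inclusion $\sO_{B_k}(-Z_{k-1})\hookrightarrow\sO_{B_k}$ identifies $\beta_{k,*}\sO_{B_k}(-Z_{k-1})$ with the subsheaf of $\sO_{\Sigma_k}$ of sections vanishing along $\Sigma_{k-1} = \beta_k(Z_{k-1})$, which equals $\sI_{\Sigma_{k-1}|\Sigma_k}$ because $Z_{k-1}$ is the reduced preimage and $\Sigma_k$ is normal. The vanishing $R^i\beta_{k,*}\sO_{B_k}(-Z_{k-1}) = 0$ for $i>0$ is exactly the Du Bois-type condition quoted in the introduction (with $k$ in place of $k+1$), and I expect this to be the main obstacle: there is no formal shortcut, since all of the genuine input — the normal Du Bois property of the secant variety — is concentrated here. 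Accordingly I would invoke the singularity analysis of \cite[Theorem 1.1]{ENP1}, which simultaneously establishes that $\Sigma_k$ has normal Du Bois singularities and delivers this higher direct image vanishing, and everything else in the proposition is then formal.
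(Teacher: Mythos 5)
Your proposal is correct and follows essentially the same route as the paper: part (3) is proved by the identical exact sequence $0\to\pi_k^*M_{E_{k+1,L}}\to M_{H_k}\to K\to 0$ with fiberwise Bott vanishing and the wedge-power filtration, and for part (2) both you and the paper defer the essential content (the higher direct image vanishing and the identification of the pushforward with $\sI_{\Sigma_{k-1}|\Sigma_k}$) to \cite{ENP1}. The only divergence is in part (1), where the paper simply cites \cite{ENP1} for the class of $Z_{k-1}$ while you sketch a direct derivation (degree $k+1$ on a general fiber plus adjunction along $Z_{k-1}$ via $\alpha_k$, the latter needing some care about where $\alpha_k$ is an isomorphism); your deduction of $\omega_{B_k}(Z_{k-1})=\pi_k^*S_{k+1,\omega_C}$ from that class is the same cancellation the paper performs.
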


\begin{proof}
$(1)$ The first assertion is \cite[Proposition 3.5 $(2)$]{ENP1}. Note that $\det E_{k+1, L} = S_{k+1, L}(-\delta_{k+1})$ and $\omega_{C_{k+1}} = S_{k+1, \omega_C}(-\delta_{k+1})$.  Since $\omega_{B_k} = \sO_{B_k}(-(k+1)H_{k+1})\otimes \pi_k^* S_{k+1, \omega_C \otimes L}(-2\delta_{k+1})$, the second assertion follows.

\medskip

\noindent $(2)$ It is \cite[Theorem 5.2 $(2)$]{ENP1}.

\medskip

\noindent $(3)$ It is shown in \cite[Proof of Lemma 5.1]{ENP1}. For reader's convenience, we give a sketch of the proof. We have a short exact sequence
$$
\begin{tikzcd}
0 \ar[r]  & \pi_k^* M_{E_{k+1, L}} \ar[r] &  M_{H_k}  \ar[r] & K \ar[r] & 0,
\end{tikzcd}
$$
where $K|_{\pi_k^{-1}(\xi)} = M_{\sO_{\nP^{k}}(1)}$ for $\xi \in C_{k+1}$. By Bott vanishing, 
$$
R^i \pi_{k,*} \A{j} K = \begin{cases} \sO_{C_{k+1}} & \text{if $i=0$ and $j=0$}\\ 0 & \text{if $i>0$ or $j>0$}. \end{cases}
$$
Considering the filtration of $\A{j} M_{H_k}$ associated to the above short exact sequence, we obtain the assertion.
\end{proof}

\begin{Lem}\label{duality}
We have
$$
\begin{array}{l}
H^i(\Sigma_k, \A{j} M_{\sO_{\Sigma_k}(1)} \otimes \sI_{\Sigma_{k-1}|\Sigma_k}(1))\\[5pt]
= H^i(B_k, \A{j} M_{H_k} \otimes \sO_{B_k}(H_k - Z_{k-1})) \\[5pt]
= H^{2k+1-i} (B_k, \A{r-j} M_{H_k} \otimes \omega_{B_k}(Z_{k-1}))^{\vee} \\[5pt]
= H^{2k+1-i} (C_{k+1}, \A{r-j} M_{E_{k+1, L}} \otimes S_{k+1, \omega_C})^{\vee} \\[5pt]
= H^{2k+1-i} (C_{r-j} \times C_{k+1}, (N_{r-j, L} \boxtimes S_{k+1, \omega_C})(-D_{r-j, k+1}))^{\vee}.
\end{array}
$$
\end{Lem}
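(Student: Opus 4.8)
The plan is to verify the four equalities in turn, reading the chain from top to bottom; each is a formal consequence of a result already established above, so the proof amounts to assembling the right functorial inputs in the right order. For the first equality I would push forward along the resolution $\beta_k \colon B_k \to \Sigma_k$. Since $M_{H_k} = \beta_k^* M_{\sO_{\Sigma_k}(1)}$ and $\sO_{B_k}(H_k) = \beta_k^* \sO_{\Sigma_k}(1)$, the sheaf $\A{j} M_{H_k} \otimes \sO_{B_k}(H_k - Z_{k-1})$ equals $\beta_k^*\big(\A{j} M_{\sO_{\Sigma_k}(1)} \otimes \sO_{\Sigma_k}(1)\big) \otimes \sO_{B_k}(-Z_{k-1})$. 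By the projection formula together with Proposition \ref{basicpropsecantvar}$(2)$, which gives $R\beta_{k,*}\sO_{B_k}(-Z_{k-1}) = \sI_{\Sigma_{k-1}|\Sigma_k}$ concentrated in degree zero, the derived pushforward is the single sheaf $\A{j} M_{\sO_{\Sigma_k}(1)} \otimes \sI_{\Sigma_{k-1}|\Sigma_k}(1)$. The Leray spectral sequence then degenerates and yields equality of cohomology in every degree.

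For the second equality I would apply Serre duality on the smooth projective variety $B_k$, which has dimension $2k+1$, so that $H^i(B_k, \mathcal{F}) = H^{2k+1-i}(B_k, \mathcal{F}^\vee \otimes \omega_{B_k})^\vee$ for the locally free sheaf $\mathcal{F} = \A{j} M_{H_k} \otimes \sO_{B_k}(H_k - Z_{k-1})$. The only task is to identify the dual. From the defining sequence $0 \to M_{H_k} \to H^0(C,L)\otimes\sO_{B_k} \to \sO_{B_k}(H_k) \to 0$ the bundle $M_{H_k}$ has rank $r$ with $\det M_{H_k} = \sO_{B_k}(-H_k)$, whence $(\A{j} M_{H_k})^\vee = \A{r-j} M_{H_k} \otimes \sO_{B_k}(H_k)$. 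Multiplying out the line bundle factors collapses everything to $\mathcal{F}^\vee \otimes \omega_{B_k} = \A{r-j} M_{H_k} \otimes \omega_{B_k}(Z_{k-1})$, which is exactly the asserted sheaf.

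For the third equality I would push forward along $\pi_k \colon B_k \to C_{k+1}$: Proposition \ref{basicpropsecantvar}$(1)$ gives $\omega_{B_k}(Z_{k-1}) = \pi_k^* S_{k+1,\omega_C}$, and Proposition \ref{basicpropsecantvar}$(3)$ gives $R\pi_{k,*}\A{r-j} M_{H_k} = \A{r-j} M_{E_{k+1,L}}$ concentrated in degree zero, so the projection formula identifies $R\pi_{k,*}\big(\A{r-j} M_{H_k} \otimes \omega_{B_k}(Z_{k-1})\big)$ with $\A{r-j} M_{E_{k+1,L}} \otimes S_{k+1,\omega_C}$, and another degenerate Leray spectral sequence transfers the cohomology from $B_k$ to $C_{k+1}$. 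For the fourth equality I would invoke Lemma \ref{wedgeM_{E_{k,L}}} with the symmetric product $C_{k+1}$ as the base (carrying $B = S_{k+1,\omega_C}$) and with $m = r-j$, which is legitimate since $\deg L \geq 2g+2k+1 \geq 2g+(k+1)-1$; this rewrites $H^n(C_{k+1}, \A{r-j} M_{E_{k+1,L}} \otimes S_{k+1,\omega_C})$ as $H^n\big(C_{k+1}\times C_{r-j}, (S_{k+1,\omega_C}\boxtimes N_{r-j,L})(-D_{k+1,r-j})\big)$, and swapping the two factors (under which $D_{k+1,r-j}$ corresponds to $D_{r-j,k+1}$) gives the desired group on $C_{r-j}\times C_{k+1}$. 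Dualizing throughout finishes.

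All four steps are formal once these inputs are in hand, so the mathematical content is entirely borrowed from the propositions and lemmas above; the work is purely one of organization. The step demanding the most care is the Serre duality computation, where one must correctly track $\det M_{H_k}$ and check that the line bundle contributions of $(\A{j}M_{H_k})^\vee$, $\sO_{B_k}(H_k - Z_{k-1})^\vee$, and $\omega_{B_k}$ collapse \emph{precisely} to $\A{r-j} M_{H_k} \otimes \omega_{B_k}(Z_{k-1})$ — a single determinant or sign slip there would misidentify the target sheaf and break the match with the third line.
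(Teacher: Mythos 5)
Your proof is correct and follows essentially the same route as the paper's: the first equality via the projection formula and Proposition \ref{basicpropsecantvar}~$(2)$, the second via Serre duality on $B_k$ using $\rank M_{H_k}=r$ and $\det M_{H_k}=\sO_{B_k}(-H_k)$, the third via Proposition \ref{basicpropsecantvar}~$(1)$ and $(3)$, and the fourth via Lemma \ref{wedgeM_{E_{k,L}}}. You have simply spelled out the details that the paper leaves implicit, and all of them check out.
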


\begin{proof}
The first equality follows from Proposition \ref{basicpropsecantvar} $(2)$ and the projection formula. Note that $\rank M_{H_k} = h^0(C, L)-1 = r$ and $\det M_{H_k} = \sO_{B_k}(-H_k)$. It follows that $\A{j} M_{H_k}^{\vee} = \A{r-j} M_{H_k} \otimes \sO_{B_k}(H_k)$. Then the second equality follows from Serre duality. The third equality follows from Proposition \ref{basicpropsecantvar} $(1)$ and $(3)$. The final equality follows from Lemma \ref{wedgeM_{E_{k,L}}}.
\end{proof}

Finally, we show some useful facts on the dualizing sheaf $\omega_{\Sigma_k}$. The following proposition will not be used in the proof of Theorem \ref{main} but will be used for some additional results. 

\begin{Prop}[Ein\footnote{This was shown to the third author by Lawrence Ein in personal communication.}]\label{Ein}
We have the following:
\begin{enumerate}
\item $\beta_{k,*}\omega_{B_k}(Z_{k-1}) = \omega_{\Sigma_k}$.
\item There is a short exact sequence
$$
\begin{tikzcd}
0 \ar[r]  & \beta_{k,*}\omega_{B_k}  \ar[r] & \omega_{\Sigma_k}  \ar[r] &\beta_{k,*}\omega_{Z_{k-1}}  \ar[r] & 0.
\end{tikzcd}
$$
\item $H^0(\Sigma_k, \omega_{\Sigma_k}(\ell)) = H^0(C_{k+1}, S^{\ell} E_{k+1, L} \otimes S_{k+1, \omega_C})$ for all $\ell \geq 0$.
\item If $k \geq 2$, then $H^0(\Sigma_k, \omega_{\Sigma_k}(\ell)) = H^0(\Sigma_{k-1}, \beta_{k,*}\omega_{Z_{k-1}}(\ell))$ for each $0 \leq \ell \leq k$.
\end{enumerate}
\end{Prop}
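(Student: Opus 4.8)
The plan is to treat the four parts in order, deriving everything from a single duality computation for part (1). Throughout set $n := 2k+1 = \dim \Sigma_k$, and recall from \cite{ENP1} that $\Sigma_k$ and $\Sigma_{k-1}$ are (arithmetically, hence genuinely) Cohen--Macaulay and that $\Sing \Sigma_k = \Sigma_{k-1}$ has codimension $2$ in $\Sigma_k$.

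For part (1), I would apply Grothendieck duality to the proper birational morphism $\beta_k \colon B_k \to \Sigma_k$, both sides being equidimensional of dimension $n$ with $B_k$ smooth and $\Sigma_k$ Cohen--Macaulay, so that $\beta_k^! \omega_{\Sigma_k}^\bullet = \omega_{B_k}^\bullet$ with $\omega_{B_k}^\bullet = \omega_{B_k}[n]$ and $\omega_{\Sigma_k}^\bullet = \omega_{\Sigma_k}[n]$. Applying duality to $\sO_{B_k}(-Z_{k-1})$ and feeding in the Du Bois-type identity $R\beta_{k,*}\sO_{B_k}(-Z_{k-1}) = \sI_{\Sigma_{k-1}|\Sigma_k}$ from Proposition \ref{basicpropsecantvar}$(2)$, the shifts cancel and I obtain
$$
R\beta_{k,*}\omega_{B_k}(Z_{k-1}) = R\mathcal{H}om_{\Sigma_k}(\sI_{\Sigma_{k-1}|\Sigma_k}, \omega_{\Sigma_k}).
$$
Taking $\mathcal{H}^0$ reduces part (1) to the claim $\mathcal{H}om_{\Sigma_k}(\sI_{\Sigma_{k-1}|\Sigma_k}, \omega_{\Sigma_k}) = \omega_{\Sigma_k}$. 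Since $\Sigma_{k-1}$ is Cohen--Macaulay of codimension $2$, applying $\mathcal{H}om(-,\omega_{\Sigma_k})$ to the sequence $0 \to \sI_{\Sigma_{k-1}|\Sigma_k} \to \sO_{\Sigma_k} \to \sO_{\Sigma_{k-1}} \to 0$ and using the vanishing $\mathcal{H}om(\sO_{\Sigma_{k-1}}, \omega_{\Sigma_k}) = \mathcal{E}xt^1(\sO_{\Sigma_{k-1}}, \omega_{\Sigma_k}) = 0$ below the codimension yields the claim.

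For part (2), I would push forward the adjunction sequence $0 \to \omega_{B_k} \to \omega_{B_k}(Z_{k-1}) \to \omega_{Z_{k-1}} \to 0$ along $\beta_k$; this sequence is valid because $Z_{k-1}$ is an effective Cartier divisor on the smooth variety $B_k$. By Grauert--Riemenschneider vanishing $R^1\beta_{k,*}\omega_{B_k} = 0$, and by part (1) identifying the middle pushforward with $\omega_{\Sigma_k}$, the resulting long exact sequence collapses to the desired short exact sequence, with $\beta_{k,*}\omega_{Z_{k-1}}$ regarded as a sheaf on $\Sigma_{k-1}$ via $Z_{k-1} \to \Sigma_{k-1}$.

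Parts (3) and (4) are then cohomological consequences. For (3), combining part (1), the projection formula, $\beta_k^*\sO_{\Sigma_k}(\ell) = \sO_{B_k}(\ell H_k)$, and $\omega_{B_k}(Z_{k-1}) = \pi_k^* S_{k+1,\omega_C}$ from Proposition \ref{basicpropsecantvar}$(1)$, I would rewrite $H^0(\Sigma_k, \omega_{\Sigma_k}(\ell)) = H^0(B_k, \sO_{B_k}(\ell H_k) \otimes \pi_k^* S_{k+1,\omega_C})$ and push down along $\pi_k$ using $\pi_{k,*}\sO_{B_k}(\ell H_k) = S^\ell E_{k+1,L}$ and $R^{>0}\pi_{k,*}\sO_{B_k}(\ell H_k) = 0$ for $\ell \geq 0$. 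For (4), I would twist the sequence of part (2) by $\sO_{\Sigma_k}(\ell)$ and take cohomology; the claimed identification follows once $H^0(B_k, \omega_{B_k}(\ell H_k)) = H^1(B_k, \omega_{B_k}(\ell H_k)) = 0$. For $1 \le \ell \le k$ these vanish because $R^j\pi_{k,*}\sO_{B_k}(-\ell H_k) = 0$ for all $j$ (Serre duality plus fibrewise $\nP^k$ cohomology), whereas for $\ell = 0$ Serre duality gives $H^i(B_k, \omega_{B_k}) = H^{n-i}(C_{k+1}, \sO_{C_{k+1}})^\vee$, which vanishes for $i = 0,1$ precisely when $2k > \dim C_{k+1} = k+1$, i.e. when $k \ge 2$ — this is exactly where the hypothesis $k \ge 2$ enters.

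The main obstacle is part (1): getting the duality bookkeeping right (the dualizing complexes, the shift $[n]$, and the precise form of $\beta_k^!$) and verifying $\mathcal{H}om(\sI_{\Sigma_{k-1}|\Sigma_k}, \omega_{\Sigma_k}) = \omega_{\Sigma_k}$, which rests jointly on the Cohen--Macaulayness and codimension-$2$ of $\Sigma_{k-1}$ and on the Du Bois-type vanishing. Once part (1) is established, parts (2)--(4) are formal consequences of adjunction, Grauert--Riemenschneider vanishing, the projective bundle formula, and the cohomological dimension of the symmetric product $C_{k+1}$.
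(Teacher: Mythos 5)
Your argument is correct, and parts (2)--(4) follow the paper's proof essentially verbatim: pushforward of the adjunction sequence plus Grauert--Riemenschneider for (2); the identity $\omega_{B_k}(Z_{k-1}) = \pi_k^* S_{k+1,\omega_C}$ and the projective bundle formula for (3); and the vanishing of $H^0$ and $H^1$ of $\omega_{B_k}(\ell H_k)$ for (4), where your fibrewise computation correctly isolates $\ell = 0$, $i=1$ as the place where $k \geq 2$ is needed. The genuine divergence is in part (1). The paper passes to Bertram's log resolution $b_k \colon \operatorname{bl}_k(B_k) \to B_k$ of the pair $(B_k, Z_{k-1})$, writes $b_k^* \omega_{B_k}(Z_{k-1}) = \omega_{\operatorname{bl}_k(B_k)}(E_0 + \cdots + E_{k-1})$ with $E_{k-1}$ the strict transform of $Z_{k-1}$, and then applies the Kov\'acs--Schwede--Smith theorem on the canonical sheaf of normal Cohen--Macaulay Du Bois singularities to the log resolution $\beta_k \circ b_k$ of $\Sigma_k$. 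You instead dualize the Du Bois-type identity $R\beta_{k,*}\sO_{B_k}(-Z_{k-1}) = \sI_{\Sigma_{k-1}|\Sigma_k}$ directly via Grothendieck duality, obtaining $R\beta_{k,*}\omega_{B_k}(Z_{k-1}) \cong R\mathcal{H}om(\sI_{\Sigma_{k-1}|\Sigma_k}, \omega_{\Sigma_k})$, and finish with the local-duality vanishing $\mathcal{E}xt^{i}(\sO_{\Sigma_{k-1}}, \omega_{\Sigma_k}) = 0$ for $i<2$, which requires only $\codim_{\Sigma_k} \Sigma_{k-1} = 2$ and the Cohen--Macaulayness of $\Sigma_k$. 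Both routes rest on the same inputs from Ein--Niu--Park; yours avoids the auxiliary log resolution and the appeal to the Du Bois canonical-sheaf theorem at the cost of some derived-category bookkeeping, and it yields slightly more, namely the identification of the full complex $R\beta_{k,*}\omega_{B_k}(Z_{k-1})$ rather than just its zeroth cohomology.
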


\begin{proof}
$(1)$ By \cite[Proposition 3.15]{ENP1}, there is a log resolution $b_k \colon \operatorname{bl}_k (B_k) \to B_k$ of $(B_k, Z_{k-1})$ constructed by Bertram in \cite{Bertram} such that
$$
b_k^*\omega_{B_k}(Z_{k-1})=\omega_{\operatorname{bl}_k(B_k)}(E_0+E_1+\cdots+E_{k-1}),
$$
where $E_0, E_1, \ldots, E_{k-2}$ are $\operatorname{bl}_k$-exceptional divisors and $E_{k-1} = \operatorname{bl}_{k,*}^{-1} Z_{k-1}$. We have
$$
(\beta_k \circ b_k)_*\omega_{\operatorname{bl}_k(B_k)}(E_0+E_1+\cdots+E_{k-1})=\beta_{k,*} \omega_{B_k}(Z_{k-1}).
$$
Note that $\beta_k \circ b_k \colon \operatorname{bl}_k(B_k) \to \Sigma_k$ is a log resolution of $\Sigma_k$.
Since $\Sigma_k$ has normal Cohen-Macaulay Du Bois singularities by \cite[Theorems 1.1 and 1.2]{ENP1}, it follows from \cite[Theorem 1.1]{KSS} that 
$$
(\beta_k \circ b_k)_*\omega_{\operatorname{bl}_k(B_k)}(E_0+E_1+\cdots+E_{k-1}) = \omega_{\Sigma_k}.
$$
Thus $\beta_{k,*}\omega_{B_k}(Z_{k-1}) = \omega_{\Sigma_k}$.

\medskip

\noindent $(2)$ We have a short exact sequence
$$
\begin{tikzcd}
0 \ar[r]  &\omega_{B_k}  \ar[r] & \omega_{B_k}(Z_{k-1})  \ar[r] &  \omega_{Z_{k-1}}   \ar[r] & 0.
\end{tikzcd}
$$
By Grauert--Riemenschneider vanishing, $R^i \beta_{k,*} \omega_{B_k}=0$ for $i > 0$.
Applying $\beta_{k,*}$ to the above short exact sequence, we obtain the assertion $(2)$.

\medskip

\noindent $(3)$ We have $H^0(\Sigma_k, \omega_{\Sigma_k}(\ell)) = H^0(B_k, \omega_{B_k}(Z_{k-1}+\ell H_k))$. Recall from Proposition \ref{basicpropsecantvar} $(1)$ that $\omega_{B_k}(Z_{k-1}) = \pi_k^* S_{k+1, \omega_C}$. Thus $H^0(B_k, \omega_{B_k}(Z_{k-1}+\ell H_k)) = H^0(C_{k+1}, S^{\ell} E_{k+1,L} \otimes S_{k+1, \omega_C})$.

\medskip

\noindent $(4)$ Since $R^i \beta_{k,*} \omega_{B_k}=0$ for $i > 0$, we have 
$$
H^i(\Sigma_k, \beta_{k,*}\omega_{B_k}(\ell))=H^i(B_k, \omega_{B_k}(\ell H_k)) ~~\text{ for $i \geq 0$.}
$$
If $k \geq 2$ and $0 \leq \ell \leq k$, we have $H^i(B_k, \omega_{B_k}(\ell H_k))=0$ for each $i=0,1$. Thus the assertion $(4)$ follows.
\end{proof}

\section{Proof of Main Theorem}\label{secproof}

In this section, we prove Theorem \ref{main}. First, we recall the setting. Let $C$ be a smooth projective curve of genus $g \geq 2$, and $L$ be a very ample line bundle on $C$. Consider the $k$-th secant variety $\Sigma_k$ of $C$ in $\nP H^0(C, L)=\nP^r$. Assume that $\deg L \gg 0$. When $k=0$ (i.e., $\Sigma_0=C$), Theorem \ref{main} is the gonality conjecture established by Ein--Lazarsfeld \cite{EL3} and Rathmann \cite{Rathmann}. Thus we assume that $k \geq 1$.\footnote{By a small modification, our proof works for the case of $k=0$. The vanishing part gives an alternative proof of the gonality conjecture. Indeed, when $k=0$ and $q=1$, we only need to verify (\ref{vanishing3}).} Put $e:=\codim \Sigma_k = r-2k-1$ and $\gamma^i:=\gamma^i(C)$ for $i \geq 0$. Fix an index $k+1 \leq q \leq 2k+2$.
\subsection*{Vanishing}
We show that 
\begin{equation}\label{vanishing1}
K_{p,q}(\Sigma_k, \sO_{\Sigma_k}(1)) = H^{q-1}(\Sigma_k, \A{p+q-1} M_{\sO_{\Sigma_k}(1)} \otimes \sO_{\Sigma_k}(1)) = 0~~\text{ for $p \geq e - \gamma^{2k+2-q}+1$}.
\end{equation}
Consider a short exact sequence
$$
\begin{tikzcd}
0 \ar[r]  & \sI_{\Sigma_k|\Sigma_{k+1}} \ar[r] & \sO_{\Sigma_{k+1}} \ar[r] & \sO_{\Sigma_k} \ar[r] & 0.
\end{tikzcd}
$$
This induces an exact sequence
\begin{multline}\label{exseqforvan}
H^{q-1}(\Sigma_{k+1}, \A{p+q-1}M_{\sO_{\Sigma_{k+1}}(1)} \otimes \sO_{\Sigma_{k+1}}(1)) \longrightarrow H^{q-1}(\Sigma_k, \A{p+q-1}M_{\sO_{\Sigma_k}(1)} \otimes \sO_{\Sigma_k}(1)) \\
\longrightarrow H^q(\Sigma_{k+1}, \A{p+q-1}M_{\sO_{\Sigma_{k+1}}(1)} \otimes \sI_{\Sigma_k|\Sigma_{k+1}}(1)) \longrightarrow H^q(\Sigma_{k+1}, \A{p+q-1}M_{\sO_{\Sigma_{k+1}}(1)} \otimes \sO_{\Sigma_{k+1}}(1)).
\end{multline}
It suffices to prove that
\begin{subequations}
\begin{align}
& H^{q-1}(\Sigma_{k+1}, \A{p+q-1}M_{\sO_{\Sigma_{k+1}}(1)} \otimes \sO_{\Sigma_{k+1}}(1)) = 0;\label{vanishing2}\\
& H^q(\Sigma_{k+1}, \A{p+q-1}M_{\sO_{\Sigma_{k+1}}(1)} \otimes \sI_{\Sigma_k|\Sigma_{k+1}}(1)) = 0.\label{vanishing3}
\end{align}
\end{subequations}
First, we check (\ref{vanishing3}). By Lemma \ref{duality},
$$
\begin{array}{l}
H^q(\Sigma_{k+1}, \A{p+q-1}M_{\sO_{\Sigma_{k+1}}(1)} \otimes \sI_{\Sigma_k|\Sigma_{k+1}}(1)) \\[5pt]
= H^{q^*+1}(C_{p^*+q^*} \times C_{k+2}, (N_{p^*+q^*, L} \boxtimes S_{k+2, \omega_C})(-D_{p^*+q^*, k+2})),
\end{array}
$$
where $p^*:= e-p \leq \gamma^{q^*}-1$ and $0 \leq q^* := 2k+2-q \leq k+1$. By the Leray spectral sequence for $\pr_1 \colon C_{p^*+q^*} \times C_{k+2} \to C_{p^*+q^*}$, it is enough to confirm that
$$
H^i(C_{p^*+q^*}, M_{p^*+q^*}^{q^*+1-i} S_{k+2, \omega_C} \otimes N_{p^*+q^*, L}) =0~~\text{ for $0 \leq i \leq q^*+1$}.
$$
When $i>0$, this follows from Lemma \ref{Fujita}. For the case $i=0$, we apply Lemma \ref{H^iS_k,N_k} to see that
$$
H^{q^*+1}(C_{k+2}, S_{k+2, \omega_C(-\xi)}) = S^{k+1-q^*} H^0(C, \omega_C(-\xi)) \otimes \A{q^*+1} H^1(C, \omega_C(-\xi))
$$
for any $\xi \in C_{p^*+q^*}$. Proposition \ref{gonseq-h^1} says that $H^1(C, \omega_C(-\xi)) \leq q^*$ since $\gamma^{q^*} \geq p^*+1$, so we obtain
$$
H^{q^*+1}(C_{k+2}, S_{k+2, \omega_C(-\xi)}) = 0.
$$
Thus $M_{p^*+q^*}^{q^*+1} S_{k+2, \omega_C} = 0$, and we obtain (\ref{vanishing3}). To finish the proof of  (\ref{vanishing1}), we proceed by induction on $q-k-1$. If $q=k+1$, then clearly 
$$
H^{k}(\Sigma_{k+1}, \A{p+k}M_{\sO_{\Sigma_{k+1}}(1)} \otimes \sO_{\Sigma_{k+1}}(1))  = K_{p,k+1}(\Sigma_{k+1}, \sO_{\Sigma_{k+1}}(1))=0,
$$
i.e., (\ref{vanishing2}) holds. Thus (\ref{vanishing1}) follows in this case. Suppose that $q \geq k+2$. Lemma \ref{basicgonseq} $(1)$ implies that $e - \gamma^{2k+2-q} +1 \geq (e-2) - \gamma^{2k+4-q} + 1$. By induction hypothesis, 
$$
H^{q-1}(\Sigma_{k+1}, \A{p+q-1}M_{\sO_{\Sigma_{k+1}}(1)} \otimes \sO_{\Sigma_k}(1))  = K_{p,q}(\Sigma_{k+1}, \sO_{\Sigma_{k+1}}(1)) = 0,
$$
i.e., (\ref{vanishing2}) holds. Thus (\ref{vanishing1}) follows.

\subsection*{Nonvanishing}
We show that\\[-20pt]

\begin{small}
\begin{equation}\label{nonvanishing}
K_{p,q}(\Sigma_k, \sO_{\Sigma_k}(1)) = H^{q-1}(\Sigma_k, \A{p+q-1} M_{\sO_{\Sigma_k}(1)} \otimes \sO_{\Sigma_k}(1)) \neq 0~~\text{ for $e-g+1 \leq p \leq e - \gamma^{2k+2-q}$}.
\end{equation}
\end{small}

\noindent We have a commutative diagram with exact rows
$$
\begin{tikzcd}
0 \ar[r] & \sI_{\Sigma_k|\Sigma_{k+1}} \ar[r] \ar[d,equal] & \sI_{\Sigma_{k-1}|\Sigma_{k+1}} \ar[r] \ar[d] & \sI_{\Sigma_{k-1}|\Sigma_k} \ar[r] \ar[d] & 0 \\
0 \ar[r] & \sI_{\Sigma_k|\Sigma_{k+1}} \ar[r] & \sO_{\Sigma_{k+1}} \ar[r] & \sO_{\Sigma_k} \ar[r] & 0.
\end{tikzcd}
$$
This gives a commutative diagram
$$
\begin{tikzcd}
H^{q-1}(\Sigma_k, \A{p+q-1}M_{\sO_{\Sigma_k}(1)}\otimes \sI_{\Sigma_{k-1}|\Sigma_k}(1)) \ar[d] \ar[r,"\varphi"] & H^q(\Sigma_{k+1}, \A{p+q-1}M_{\sO_{\Sigma_{k+1}}(1)}\otimes \sI_{\Sigma_k|\Sigma_{k+1}}(1)) \ar[d,equal]  \\
H^{q-1}(\Sigma_k, \A{p+q-1}M_{\sO_{\Sigma_k}(1)}\otimes \sO_{\Sigma_k}(1)) \ar[r]& H^q(\Sigma_{k+1}, \A{p+q-1}M_{\sO_{\Sigma_{k+1}}(1)} \otimes \sI_{\Sigma_k|\Sigma_{k+1}}(1)).
\end{tikzcd}
$$
It is enough to prove that the map $\varphi$ is nonzero. For this purpose, considering the commutative diagram (\ref{additionmapBertram}), we regard $\varphi$ as a map
\begin{multline*}
\varphi \colon H^{q-1}(B_k, \A{p+q-1}M_{H_{k}}\otimes \sO_{B_k}(H_{k}-Z_{k-1} ))\otimes H^0(C, \sO_C) \\\longrightarrow H^q(B_{k+1}, \A{p+q-1}M_{H_{k+1}}\otimes \sO_{B_{k+1}}(H_{k+1}-Z_k))
\end{multline*}
In view of Lemma \ref{duality}, the map $\varphi$ is dual to the map
\begin{multline*}
\varphi^{\vee} \colon H^{q^*+1}(C_{p^*+q^*} \times C_{k+2}, (N_{p^*+q^*, L} \boxtimes S_{k+2, \omega_C})(-D_{p^*+q^*, k+2})) \\
\longrightarrow H^{q^*}(C_{p^*+q^*} \times C_{k+1}, (N_{p^*+q^*, L} \boxtimes S_{k+1, \omega_C})(-D_{p^*+q^*, k+1})) \otimes H^1(C, \omega_C),
\end{multline*}
where $\gamma^{q^*} \leq p^*:=e-p \leq g-1$ and $0 \leq q^*:=2k+2-q \leq k+1$. Notice that this map is induced from an injective map
$$
(\id_{C_{p^*+q^*}} \times \sigma_{k+2})^* (N_{p^*+q^*, L} \boxtimes S_{k+2, \omega_C})(-D_{p^*+q^*, k+2}) \longhookrightarrow
(N_{p^*+q^*, L} \boxtimes S_{k+1, \omega_C})(-D_{p^*+q^*,k+1}) \boxtimes \omega_C
$$
of line bundles on $C_{p^*+q^*} \times C_{k+1} \times C$. Lemma \ref{Fujita} says that
$$
H^{i}(C_{p^*+q^*}, M_{p^*+q^*}^j(S_{\ell, \omega_C}) \otimes N_{p^*+q^*,L}) = 0~~\text{ for $i>0, ~j \geq 0,~\ell=k+1$ or $k+2$}.
$$
By the Leray spectral sequences for $\pr_1$, we may think that $\varphi^{\vee}$ is a map
\begin{multline*}
\varphi^{\vee} \colon H^0(C_{p^*+q^*}, M_{p^*+q^*}^{q^*+1}S_{k+2, \omega_C} \otimes N_{p^*+q^*, L}) \\
 \longrightarrow H^0(C_{p^*+q^*}, M_{p^*+q^*}^{q^*} S_{k+1, \omega_C} \otimes N_{p^*+q^*, L}) \otimes H^1(C, \omega_C).
\end{multline*}
Notice that this map is induced from a map
$$
\psi \colon M_{p^*+q^*}^{q^*+1}S_{k+2, \omega_C} \longrightarrow M_{p^*+q^*}^{q^*} S_{k+1, \omega_C}  \otimes H^1(C, \omega_C)
$$
of coherent sheaves on $C_{p^*+q^*}$ tensoring by $N_{p^*+q^*, L}$. As $N_{p^*+q^*, L}$ is sufficiently positive, to prove that the map $\varphi^{\vee}$ is nonzero, it suffices to confirm that the map $\psi$ is nonzero. To this end, we apply Proposition \ref{gonseq-h^1} to see that $\omega_C$ fails to be $(q^*-1)$-weakly $p^*$-very ample since $p^* \geq \gamma^{q^*}$. Then Lemma \ref{exactdivisor} gives an effective divisor $\xi \in C_{p^*+q^*}$ on $C$ such that
$$
h^0(C, \omega_C(-\xi)) =g-p^* \geq 1~~\text{ and }~~h^1(C, \omega_C(-\xi)) = q^*+1.
$$
By Lemma \ref{H^iS_k,N_k}, 
$$
H^{i}(C_{\ell}, S_{\ell, \omega_C(-\xi)}) = S^{\ell-i} H^0(C, \omega_C(-\xi)) \otimes \A{i} H^1(C, \omega_C(-\xi)),
$$
so this cohomology vanishes when $i \geq q^*+2$. By semicontinuity, $h^1 (C, \omega_C(-\xi')) \leq q^*+1$ (and hence $H^{q^*+2}(C_{\ell}, S_{\ell, \omega_C(-\xi')}) = 0$) for $\xi'$ in a neighborhood of $\xi$ in $C_{p^*+q^*}$. By the cohomology and base change,
$$
\rho(\xi)^{q^*+1} \colon M_{p^*+q^*}^{q^*+1}S_{k+2, \omega_C} \otimes \kk(\xi) \longrightarrow H^{q^*+1}(C_{k+2}, S_{k+2, \omega_C(-\xi)})
$$
is an isomorphism. We have a commutative diagram
$$
\begin{tikzcd}[column sep=1.8cm] 
M_{p^*+q^*}^{q^*+1}S_{k+2, \omega_C} \otimes \kk(\xi) \ar[r,"\psi \otimes \kk(\xi)"] \ar[d,"\rho(\xi)^{q^*+1}",swap, "\rotatebox{270}{$\simeq$}"'] & M_{p^*+q^*}^{q^*}S_{k+1, \omega_C} \otimes H^1(C, \omega_C) \otimes \kk(\xi)\ \ar[d] \\
H^{q^*+1}(C_{k+2}, S_{k+2, \omega_C(-\xi)}) \ar[r] & H^{q^*}(C_{k+1}, S_{k+1, \omega_C(-\xi)}) \otimes H^1(C, \omega_C).
\end{tikzcd}
$$
We reduce the problem to checking that the bottom map is nonzero. To this end, note that the bottom map can be identified with the map
\begin{multline*}
\id_{S^{k+1-q^*} H^0(C, \omega_C(-\xi))} \otimes \delta \colon S^{k+1-q^*} H^0(C, \omega_C(-\xi)) \otimes \A{q^*+1} H^1(C, \omega_C(-\xi))\\
 \longrightarrow S^{k+1-q^*} H^0(C, \omega_C(-\xi))  \otimes \A{q^*} H^1(C, \omega_C(-\xi)) \otimes H^1(C, \omega_C),
\end{multline*}
where $\delta$ is a Koszul-like map. For a surjective map 
$$
\eta \colon H^1(C, \omega_C(-\xi)) \xrightarrow{~\cdot \xi~} H^1(C, \omega_C),
$$ 
let $s_1, \ldots, s_{q^*+1}$ be a basis of $H^1(C, \omega_C(-\xi))$ with $\eta(s_1) = \cdots = \eta(s_{q^*}) = 0$ but $\eta(s_{q^*+1}) \neq 0$. Then
$$
\delta(s_1 \wedge \cdots \wedge s_{q^*} \wedge s_{q^*+1}) = (-1)^{q^*} s_1 \wedge \cdots \wedge s_{q^*} \otimes \eta(s_{q^*+1}) \neq 0.
$$
Thus the bottom map $\id_{S^{k+1-q^*} H^0(C, \omega_C(-\xi))} \otimes \delta$ in the above commutative diagram is nonzero. Therefore, the map $\varphi^{\vee}$ (and hence $\varphi$) is nonzero, so (\ref{nonvanishing}) follows.

\section{Complements and Questions}\label{secquestions}
In this section, we present some additonal results and problems. We keep using the notations in the previous section. Let $C$ be a smooth projective curve of genus $g \geq 2$, and $L$ be a line bundle on $C$ with $\deg L \geq 2g+2k+1$. We denote by $\Sigma_k$ the $k$-th secant variety of $C$ in $\nP^{2k+1+e} = \nP H^0(C, L)$.
Consider the case that $k=0$. Recall from \cite[Theorem 1.1]{Rathmann} that if $H^1(C, L \otimes \omega_C^{-1})=0$, then
$$
K_{p,1}(C, L) \neq 0~~\Longleftrightarrow~~1\leq p \leq e-\gon(C)+1.
$$
Recall from \cite[Theorem (4.a.1)]{Green}, \cite[Theorem 2]{GL} that if $H^0(C, L \otimes \omega_C^{-1}) \neq 0$, then
$$
K_{p,2}(C, L) \neq 0~~\Longleftrightarrow~~e-g+1 \leq p \leq e.
$$
Thus Theorem \ref{main} holds for $k=0$ as soon as $\deg L \geq 4g-3$.

\begin{problem}
Find an effective bound for $\deg L$ such that the conclusion of Theorem \ref{main} holds. 
\end{problem}

We do not attempt to make a conjecture for what the best bound for $\deg L$ should be, but we expect that it would be linear in $g$. Here we give answers for some partial cases.

\subsection*{Effective Nonvanishing for $q=k+1$}
Recall from Lemma \ref{basicgonseq} $(2)$ that $\gamma^{k+1}(C) = g$ for $k \geq g-1$. If $k \geq g-1$ and $\deg L \geq 2g+2k+1$, then \cite[Theorem 1.2]{ENP1} implies that
$$
K_{p,k+1}(\Sigma_k, \sO_{\Sigma_k}(1)) \neq 0~~\text{ for $1 \leq p \leq e-\gamma^{k+1}(C)$}.
$$
Thus we assume that $k \leq g-2$. On the other hand, Sidman--Vermeire \cite[Theorem 1.2]{SV1} proved that if $L=L_1 \otimes L_2$, where $L_1, L_2$ are line bundles on $C$ with $s+1:=h^0(C, L_1)\geq k+2$ and $t+1 := h^0(C, L_2)\geq k+2$, then 
$$
K_{p,k+1}(\Sigma_k, \sO_{\Sigma_k}(1)) \neq 0~~\text{ for $1 \leq p \leq s+t-2k-1$}.
$$
This yields the following effective nonvanishing statement:

\begin{Prop}\label{effnonvanq=k+1}
Assume that $k \leq g-2$ and $\deg L \geq  2g+\gamma^{k+1}(C)+k$. Then
$$
K_{p,k+1}(\Sigma_k, \sO_{\Sigma_k}(1)) \neq 0~~\text{ for $1 \leq p \leq e-\gamma^{k+1}(C)$}.
$$
\end{Prop}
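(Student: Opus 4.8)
The plan is to deduce this from the Sidman--Vermeire nonvanishing criterion \cite[Theorem 1.2]{SV1} recalled just above the statement, by choosing a suitable factorization $L = L_1 \otimes L_2$. The guiding idea is to let $L_1$ carry the linear series that computes $\gamma^{k+1}(C)$ and to let $L_2$ absorb the remaining (large) degree, thereby making the quantity $s+t-2k-1$ reach the predicted bound.

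First I would invoke Definition \ref{defhighergon} to choose a line bundle $L_1$ on $C$ with $\deg L_1 = \gamma^{k+1}(C) + k + 1$ and $h^0(C, L_1) \geq k+2$; such $L_1$ exists because the minimum in the definition of $\gamma^{k+1}(C)$ is attained, so $C$ carries a $g^{k+1}_d$ with $d = \gamma^{k+1}(C) + k + 1$. Writing $s := h^0(C, L_1) - 1$, we get $s \geq k+1$, hence $s+1 \geq k+2$. Next set $L_2 := L \otimes L_1^{-1}$, so that $L = L_1 \otimes L_2$, and compute
$$
\deg L_2 = \deg L - \gamma^{k+1}(C) - k - 1 \geq 2g - 1
$$
using the hypothesis $\deg L \geq 2g + \gamma^{k+1}(C) + k$. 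In particular $\deg L_2 > 2g - 2 = \deg \omega_C$, so $L_2$ is nonspecial, and Riemann--Roch gives $t := h^0(C, L_2) - 1 = \deg L_2 - g \geq g - 1$. This is exactly where the hypothesis $k \leq g-2$ enters: it forces $t + 1 \geq g \geq k+2$, so that $L_2$ also has enough sections to feed into the Sidman--Vermeire theorem. (One also notes that $\gamma^{k+1}(C) \geq k+1$ by Lemma \ref{basicgonseq} $(2)$, whence $\deg L \geq 2g + 2k + 1$, so the standard secant setup is valid.)

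With both factors in hand, the remaining step is pure arithmetic. Since $L$ is nonspecial, $e = h^0(C, L) - 2k - 2 = \deg L - g - 2k - 1$, and therefore
$$
s + t - 2k - 1 \geq (k+1) + \big(\deg L - \gamma^{k+1}(C) - k - 1 - g\big) - 2k - 1 = e - \gamma^{k+1}(C).
$$
Thus the desired range $1 \leq p \leq e - \gamma^{k+1}(C)$ is contained in the range $1 \leq p \leq s + t - 2k - 1$ on which Sidman--Vermeire guarantees $K_{p,k+1}(\Sigma_k, \sO_{\Sigma_k}(1)) \neq 0$, and the claim follows at once.

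The argument is essentially an optimization: among all factorizations $L = L_1 \otimes L_2$ one wants to maximize $s + t$, and concentrating the ``defect'' into $L_1$ via the gonality series is what pushes $s+t$ up to the predicted value. I expect no serious obstacle here; the only point requiring genuine care is verifying that $L_2$ retains at least $k+2$ sections, and it is reassuring that this is guaranteed precisely by $k \leq g-2$ together with the stated degree bound — so the hypotheses of the Proposition are exactly what the method needs.
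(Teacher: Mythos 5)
Your proposal is correct and follows essentially the same route as the paper: factor $L = L_1 \otimes L_2$ with $L_1$ a line bundle of degree $\gamma^{k+1}(C)+k+1$ realizing a $g^{k+1}_d$, check that $L_2$ is nonspecial with at least $k+2$ sections via $k \leq g-2$, and feed the factorization into Sidman--Vermeire's theorem. The only cosmetic difference is that the paper uses Lemma \ref{exactdivisor} to arrange $h^0(C,L_1)=k+2$ exactly (so $s+t-2k-1$ equals $e-\gamma^{k+1}(C)$ on the nose), whereas you only need the inequality $\geq$, which your choice of $L_1$ already guarantees.
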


\begin{proof}
By Lemma \ref{basicgonseq} $(2)$, $\gamma^{k+1}(C) \geq k+1$, so $\deg L \geq 2g+2k+1$. 
We write $\deg L = 2g+\gamma^{k+1}(C)+k+\ell$ for some integer $\ell \geq 0$. Then  $e=g+\gamma^{k+1}(C)+\ell-k-1$. Lemma \ref{exactdivisor} gives a line bundle $L_1$ on $C$ with $\deg L_1 = \gamma^{k+1}(C)+k+1$ and  $s+1:=h^0(C, L_1)=k+2$. Let $L_2:=L \otimes L_1^{-1}$ so that $L=L_1 \otimes L_2$. Then $\deg L_2 = 2g-1+\ell$ and $t+1:=h^0(C, L_2) = g+\ell$. Note that 
$$
s+t-2k-1 = g+\ell-k -1 =e-\gamma^{k+1}(C).  
$$
Thus the proposition follows from  \cite[Theorem 1.2]{SV1}.
\end{proof}

\begin{Rmk}
Assume that $k \leq g-2$. By Lemma \ref{basicgonseq} $(2)$, $\gamma^{k+1}(C) \leq g-1$. Then Proposition \ref{effnonvanq=k+1} holds when $\deg L \geq 4g-3$.
\end{Rmk}

\subsection*{Effective Nonvanishing for $q=2k+2$}
Assume that $\deg L \geq 2g+2k+1$. By duality, we have
$$
K_{p,2k+2}(\Sigma_k, \sO_{\Sigma_k}(1)) = K_{e-p, 0}(\Sigma_k, \omega_{\Sigma_k}; \sO_{\Sigma_k}(1))^{\vee}.
$$
Note that if $K_{g-1, 0}(\Sigma_k, \omega_{\Sigma_k}; \sO_{\Sigma_k}(1)) \neq 0$, then $K_{p,2k+2}(\Sigma_k, \sO_{\Sigma_k}(1)) \neq 0$ for $e-g+1 \leq p \leq e$. 
We need to find an effective bound on $\deg L$ for
$$
K_{g-1,0}(\Sigma_k, \omega_{\Sigma_k}; \sO_{\Sigma_k}(1)) = H^0(\Sigma_k, \A{g-1} M_{\sO_{\Sigma_k}(1)} \otimes \omega_{\Sigma_k})
 \neq 0.
 $$
Notice that $K_{g-1,0}(\Sigma_k, \omega_{\Sigma_k}; \sO_{\Sigma_k}(1))$ is the kernel of the Koszul differential
$$
\delta \colon \A{g-1} H^0(\Sigma_k, \sO_{\Sigma_k}(1)) \otimes H^0(\Sigma_k, \omega_{\Sigma_k}) \longrightarrow \A{g-2} H^0(\Sigma_k, \sO_{\Sigma_k}(1)) \otimes H^0(\Sigma_k, \omega_{\Sigma_k}(1)).
$$
In view of Proposition \ref{Ein}, $\delta$ can be identified with the map
$$
\delta \colon \A{g-1} H^0(C, L) \otimes S^{k+1} H^0(C, \omega_C) \longrightarrow \A{g-2} H^0(C, L) \otimes H^0(C, L \otimes \omega_C) \otimes S^k H^0(C, \omega_C)
$$
given by
$$
\delta(s_1 \wedge \cdots \wedge s_{g-1} \otimes f)=\sum_{i=1}^{g-1}\sum_{j=1}^g (-1)^{i-1}s_1 \wedge \cdots \wedge \widehat{s_i} \wedge \cdots \wedge s_{g-1} \otimes s_i x_j \otimes \frac{\partial f}{\partial x_j},
$$
where $x_1, \ldots, x_g$ is a basis of $H^0(C, \omega_C)$. The following gives an answer to a question of Sidman--Vermeire in \cite[p.164]{SV2}. 

\begin{Prop}\label{effnonvanq=2k+2}
We have the following:
\begin{enumerate}
\item If $k$ is even, then there is an injective map
$$
S^{g-1}H^0(C,L\otimes\omega_C^{-k-1})\longhookrightarrow K_{g-1,0}(\Sigma_k,\omega_{\Sigma_k};\sO_{\Sigma_k}(1)).
$$
\item If $k$ is odd, then there is an injective map
$$
\A{g-1}H^0(C,L\otimes\omega_C^{-k-1}) \longhookrightarrow K_{g-1,0}(\Sigma_k,\omega_{\Sigma_k};\sO_{\Sigma_k}(1)).
$$
\end{enumerate}
In particular, if
$$
h^0(C,L\otimes\omega_C^{-k-1})\geq
\begin{cases}
1&\textup{when $k$ is even}\\
g-1&\textup{when $k$ is odd,}
\end{cases}
$$
then
$$
K_{p,2k+2}(\Sigma_k, \sO_{\Sigma_k}(1)) \neq 0~~\text{ for $e-g+1 \leq p \leq e$}.
$$
\end{Prop}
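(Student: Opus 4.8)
The plan is to reduce everything to the nonvanishing of $K_{g-1,0}(\Sigma_k,\omega_{\Sigma_k};\sO_{\Sigma_k}(1))$ and then to produce explicit elements of it. By the duality and the implication recorded just before the statement, the ``in particular'' clause follows at once from the injections $(1)$ and $(2)$, so it suffices to construct these. Using Proposition \ref{Ein} I identify $K_{g-1,0}(\Sigma_k,\omega_{\Sigma_k};\sO_{\Sigma_k}(1))$ with $\ker\delta$, where $\delta$ is the explicit Koszul-type map displayed above, with source $\A{g-1}H^0(C,L)\otimes S^{k+1}H^0(C,\omega_C)$.

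Write $V:=H^0(C,\omega_C)$, $W:=H^0(C,L)$, $H:=H^0(C,L\otimes\omega_C^{-k-1})$, and fix a basis $x_1,\dots,x_g$ of $V$. Multiplication of sections gives $\mu\colon H\otimes S^{k+1}V\to W$, $h\otimes f\mapsto h\bar f$, and the mechanism behind the construction is the elementary identity $(h\bar f)\,x_j=h\,\overline{fx_j}$, which says that the term $s_ix_j$ occurring in $\delta$ depends only on $fx_j\in S^{k+2}V$. The clean prototype is $k=0$, where $S^{k+1}V=V$: for $h\in H$ the element $\Xi_h:=\sum_{j=1}^{g}(-1)^{j-1}\big(\bigwedge_{l\neq j}h x_l\big)\otimes x_j$ satisfies $\delta(\Xi_h)=0$ by a one-line Koszul cancellation using $hx_l\cdot x_j=hx_j\cdot x_l$. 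Intrinsically, $\Xi_h$ is obtained by feeding $h^{g-1}\in S^{g-1}H$ together with the canonical element $\mathrm{covol}\in\A{g-1}V\otimes V$ — the image of a generator of $\det V=\A{g}V$ under the exterior comultiplication $\A{g}V\to\A{g-1}V\otimes V$ — through $\A{g-1}\mu_0\otimes\id$, where $\mu_0\colon H\otimes V\to W$; being a polarization, this is symmetric in the factors of $H$, hence factors through $S^{g-1}H$, matching $(1)$ for $k=0$.

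For general $k$ I would replace $\mathrm{covol}$ by a canonical ``Wronskian-type'' element $c_k\in\A{g-1}(S^{k+1}V)\otimes S^{k+1}V$ built from the canonical volume form $\det V$, and define the map by sending $\phi\in S^{g-1}H$ (resp. $\A{g-1}H$) to the image of $\phi\otimes c_k$ under $\A{g-1}\mu\otimes\id$, landing in $\A{g-1}W\otimes S^{k+1}V$. The closedness $\delta(\cdot)=0$ is independent of $H$ and amounts to a ``Koszul--polarization'' identity for $c_k$, verified by the same symmetry-of-multiplication cancellation as in the prototype. \textbf{The crux is the parity bookkeeping.} Interchanging two sections of $H$ transposes two wedge factors of $\A{g-1}W$ and simultaneously permutes the symmetric factors organized by $c_k$; the resulting sign depends only on the parity of $k$, and comparison with the symmetric base case $k=0$ forces the even case through $S^{g-1}H$ and the odd case through $\A{g-1}H$. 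Constructing $c_k$ — equivalently, singling out the correct nonzero $\det V$-component of $\A{g-1}(S^{k+1}V)\otimes S^{k+1}V$ and fixing its sign by a plethysm computation — is the step I expect to require the most care.

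Finally, for injectivity it suffices, since the relevant Cauchy inclusion splits, to check that $\A{g-1}\mu\otimes\id$ does not annihilate the chosen summand; specializing the free $S^{k+1}V$-factor to a single nonzero $\bar f_0\in H^0(C,\omega_C^{k+1})$ turns the class into $\bigwedge_{a}(h_a\bar f_0)\in\A{g-1}W$ (up to the volume form), which is nonzero exactly when $h_1\bar f_0,\dots,h_{g-1}\bar f_0$ are linearly independent, hence when the $h_a$ are. Granting $(1)$ and $(2)$, the hypothesis $h^0(C,L\otimes\omega_C^{-k-1})\ge 1$ (resp. $\ge g-1$) gives $S^{g-1}H\neq 0$ (resp. $\A{g-1}H\neq 0$), whence $K_{g-1,0}(\Sigma_k,\omega_{\Sigma_k};\sO_{\Sigma_k}(1))\neq 0$ and therefore $K_{p,2k+2}(\Sigma_k,\sO_{\Sigma_k}(1))\neq 0$ for $e-g+1\le p\le e$.
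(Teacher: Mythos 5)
Your reduction to $\ker\delta$ via Proposition \ref{Ein}, your base case $k=0$, and your recognition that everything hinges on a canonical ``covolume'' element and a parity count are all aligned with the paper's argument. But the step you yourself flag as requiring the most care --- the construction of $c_k$ and the verification of its closedness and of the symmetric/alternating factorization --- is precisely the content of the proof, and as written it is a genuine gap rather than a routine verification. Two concrete problems: (i) you place $c_k$ in $\A{g-1}(S^{k+1}V)\otimes S^{k+1}V$ for all $k$, but the natural candidate (the $(k+1)$-st power of the comultiplied volume form) is \emph{alternating} in the $g-1$ slots only when $k+1$ is odd, i.e.\ $k$ even; for $k$ odd it is \emph{symmetric} in those slots, which is exactly why it must be paired with $\A{g-1}H$ rather than $S^{g-1}H$ to land in $\A{g-1}W$. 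Your parity discussion gestures at this but never pins down where $c_k$ actually lives, and the ``plethysm computation'' singling out a $\det V$-isotypic component is not guaranteed to produce a nonzero element with the required Koszul identity. (ii) Your injectivity argument specializes the free $S^{k+1}V$-factor to a single $\bar f_0$, but whether that specialization of $c_k$ is nonzero again depends on knowing $c_k$ explicitly.

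The paper sidesteps both issues by working one level up, in the tensor ring $T^{g-1}R\otimes S^{\ast}H^0(C,\omega_C)$ with $R=\bigoplus_{i,j}H^0(C,L_0^i\otimes\omega_C^j)$, $L_0=L\otimes\omega_C^{-k-1}$: the Koszul differential extends to a derivation $d$ there, the element $\textup{alt}(x_1\otimes\cdots\otimes x_g)$ is $d$-closed by the same two-term cancellation you use for $\Xi_h$, and the sought class is simply $(\textup{sym}(\alpha_0)\otimes 1)\cdot(\textup{alt}(x_1\otimes\cdots\otimes x_g))^{k+1}$ (resp.\ with $\textup{alt}(\alpha_0)$ for $k$ odd). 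Closedness is then the Leibniz rule, the symmetric-versus-alternating dichotomy is just the sign of a transposition acting on a product of $k+1$ alternating tensors, and nonvanishing (hence injectivity of $\alpha_0\mapsto\alpha$) is immediate because the ring is an integral domain --- no isotypic decomposition or specialization is needed. To complete your proof you would need to either carry out this tensor-algebra construction or supply an explicit $c_k$ together with proofs of its closedness, its correct (parity-dependent) symmetry type, and its nonvanishing after the specialization you invoke.
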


\begin{proof}
First, we recall some notations from multilinear algebra.
Let $V$ be a vector space over $\kk$, and
$$
T^m V := \underbrace{V \otimes \cdots \otimes V}_{m~\text{times}}~~\text{ for any integer $m \geq 0$}.
$$
Since $\operatorname{char}(\kk)=0$, there are natural splitting injective $\kk$-linear maps
\begin{align*}
& \textup{alt} \colon \A{m} V \longhookrightarrow T^mV,~~v_1 \wedge \cdots \wedge v_m \longmapsto \sum_{\sigma \in \mathfrak{S}_m } \textup{sign}(\sigma) v_{\sigma(1)} \otimes \cdots \otimes v_{\sigma(m)};\\
& \textup{sym} \colon S^m V \longhookrightarrow T^mV, ~~v_1\cdots v_m \longmapsto \sum_{\sigma \in \mathfrak{S}_m} v_{\sigma(1)} \otimes \cdots \otimes v_{\sigma(m)}.
\end{align*}
Put $\textup{Alt}^m V:=\textup{alt} (\A{m} V)$ and $\textup{Sym}^m V:=\textup{sym}(S^m V)$. 

\medskip

Now, write $L_0:=L\otimes\omega_C^{-k-1}$, and let
$$
R:=\bigoplus_{i,j\geq 0}H^0(C,L_0^i\otimes\omega_C^j).
$$
We have the following commutative diagram\\[-20pt]

\begin{small}
$$
\begin{tikzcd}
\A{g-1}H^0(C,L)\otimes S^{k+1}H^0(C,\omega_C) \ar[r,"\delta"] \ar[d, hook, "\textup{alt} \otimes \id_{S^{k+1}H^0(C, \omega_C)}", swap] & \A{g-2}H^0(C,L)\otimes H^0(C,L\otimes\omega_C)\otimes S^kH^0(C,\omega_C) \ar[d, hook, "\textup{alt} \otimes \id_{H^0(C, L \otimes \omega_C)} \otimes \id_{S^{k}H^0(C, \omega_C)}"] \\
T^{g-1}R\otimes S^\ast H^0(C,\omega_C) \ar[r,"d",swap] & T^{g-1}R\otimes S^\ast H^0(C,\omega_C),
\end{tikzcd}
$$
\end{small}

\noindent where the bottom map $d$ is defined by
$$
d(s_1\otimes\cdots\otimes s_{g-2}\otimes s_{g-1}\otimes f)=\sum_{i=1}^g s_1\otimes\cdots\otimes s_{g-2}\otimes s_{g-1}x_i\otimes\frac{\partial f}{\partial x_i}.
$$
Notice that there is a canonical ring structure on $T^{g-1}R\otimes S^\ast H^0(C,\omega_C)$ and the operator $d$ on $T^{g-1}R\otimes S^\ast H^0(C,\omega_C)$ satisfies the chain rule. Consider the alternating tensor
$$
\textup{alt}(x_1\otimes\cdots\otimes x_g)\in\textup{Alt}^gH^0(C,\omega_C)\subseteq T^{g-1}R\otimes S^\ast H^0(C,\omega_C).
$$

\medskip

Suppose that $k$ is even. We may assume that $H^0(C, L \otimes \omega_C^{-k-1}) = H^0(C, L_0) \neq 0$. Let $\alpha_0\in S^{g-1}H^0(C,L_0)$ be any nonzero element, and
$$
\alpha:=(\textup{sym}(\alpha_0)\otimes 1)(\textup{alt}(x_1\otimes\cdots\otimes x_g))^{k+1}\in T^{g-1}R\otimes S^\ast H^0(C,\omega_C).
$$
On the factor $T^{g-1}R$, the element $\textup{sym}(\alpha_0)\otimes 1$ is symmetric, and the element $\textup{alt}(x_1\otimes\cdots\otimes x_g)$ is alternating. Thus $\alpha$ is alternating, that is, $\alpha\in\textup{Alt}^{g-1}H^0(C,L)\otimes S^{k+1}H^0(C,\omega_C)$. On the other hand, by the chain rule, $d\alpha=0$ since $d(\textup{sym}(\alpha_0)\otimes 1)=0$ and $d(\textup{alt}(x_1\otimes\cdots\otimes x_g))=0$. As $T^{g-1}R\otimes S^\ast H^0(C,\omega_C)$ is an integral domain, $\alpha$ is a nonzero element. We have shown that there is an element $\alpha' \in \A{g-1} H^0(C, L) \otimes S^{k+1} H^0(C, \omega_C)$ such that $\delta(\alpha') = 0$. By sending $\alpha_0$ to $\alpha'$, we obtain the injective map in $(1)$. Suppose that $k$ is odd. Replacing $\textup{sym}(\alpha_0)$ with $\textup{alt}(\alpha_0)$ in the definition of $\alpha$, we obtain the injective map in $(2)$. 
\end{proof}

If $C$ is a hyperelliptic curve, then there is a morphism $\tau \colon C \to \nP^1$ of degree two such that $\tau^* \sO_{\nP^1}(g-1)= \omega_C$. Let $P:=\tau^* \sO_{\nP^1}(1)$ so that $\omega_C = P^{g-1}$. In this case, we can improve the previous proposition as follows.

\begin{Prop}\label{effnonvanq=2k+2hyperelliptic}
Assume that $C$ is a hyperelliptic curve. If $H^0 (C, L \otimes P^{-g-k+1} ) \neq 0$, then 
$$
K_{p,0}(\Sigma_k, \omega_{\Sigma_k}; \sO_{\Sigma_k}(1)) \neq 0~~\text{ for $0 \leq p \leq g-1$}.
$$
\end{Prop}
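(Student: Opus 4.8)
The plan is to reduce the assertion, which concerns all $0 \le p \le g-1$, to the \emph{single} nonvanishing $K_{g-1,0}(\Sigma_k, \omega_{\Sigma_k}; \sO_{\Sigma_k}(1)) \neq 0$, and then to produce an explicit Koszul class that exploits the hyperelliptic pencil. For the reduction, recall the duality $K_{p,2k+2}(\Sigma_k, \sO_{\Sigma_k}(1)) = K_{e-p,0}(\Sigma_k, \omega_{\Sigma_k}; \sO_{\Sigma_k}(1))^{\vee}$ together with the implication recorded just before Proposition \ref{effnonvanq=2k+2}: if $K_{g-1,0}(\Sigma_k, \omega_{\Sigma_k}; \sO_{\Sigma_k}(1)) \neq 0$, then $K_{p,2k+2}(\Sigma_k, \sO_{\Sigma_k}(1)) \neq 0$ for $e-g+1 \le p \le e$. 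Dualizing the latter range back, it is exactly $K_{p',0}(\Sigma_k, \omega_{\Sigma_k}; \sO_{\Sigma_k}(1)) \neq 0$ for $0 \le p' \le g-1$, which is the desired conclusion. Hence it suffices to prove $K_{g-1,0}(\Sigma_k, \omega_{\Sigma_k}; \sO_{\Sigma_k}(1)) \neq 0$ under the single hypothesis $H^0(C, L \otimes P^{-g-k+1}) \neq 0$.

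Next, exactly as in the proof of Proposition \ref{effnonvanq=2k+2}, I would note that $H^0(\Sigma_k, \omega_{\Sigma_k}(-1)) = H^{2k+1}(\Sigma_k, \sO_{\Sigma_k}(1))^{\vee} = 0$, so that $K_{g-1,0}(\Sigma_k, \omega_{\Sigma_k}; \sO_{\Sigma_k}(1))$ is literally the kernel of the Koszul differential $\delta$ displayed there, after the identifications $H^0(\sO_{\Sigma_k}(1)) = H^0(C,L)$ and $H^0(\omega_{\Sigma_k}) = S^{k+1}H^0(C,\omega_C)$ coming from Proposition \ref{Ein}(3) and Lemma \ref{H^iS_k,N_k}. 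I would then work in the same multilinear model $T^{g-1}R \otimes S^{\ast}H^0(C,\omega_C)$ equipped with the chain-rule derivation $d$, so that producing a nonzero class amounts to exhibiting a single $d$-closed alternating tensor.

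The improvement over Proposition \ref{effnonvanq=2k+2} lies in \emph{economizing} the positivity consumed, and this is where the hyperelliptic structure enters. Using $\omega_C = P^{g-1}$ and the resulting isomorphism $H^0(C,\omega_C) = S^{g-1}H^0(C,P)$, write $H^0(C,P) = \langle u, v\rangle$ and $x_i := u^{g-1-i}v^{i}$ for the standard basis of $H^0(C,\omega_C)$. Fixing $0 \neq s_0 \in H^0(C, L \otimes P^{-g-k+1})$, set $\ell_m := s_0 \cdot u^{g+k-1-m}v^{m} \in H^0(C,L)$ for $0 \le m \le g+k-1$. The crucial feature is the \emph{scrollar relation} $\ell_m \cdot x_i = \ell_{m-1} \cdot x_{i+1}$ in $H^0(C, L \otimes \omega_C)$, valid whenever $m+i$ is fixed, since both sides equal $s_0\, u^{2g+k-2-(m+i)}v^{m+i}$. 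Out of the $\ell_m$ and the symmetric $\omega_C$-content I would assemble a single $d$-closed element $\alpha \in \A{g-1}H^0(C,L) \otimes S^{k+1}H^0(C,\omega_C)$ — the pencil-adapted analogue of $(\textup{sym}(s_0^{g-1})\otimes 1)\cdot(\textup{alt}(x_1\otimes\cdots\otimes x_g))^{k+1}$, but whose each exterior factor consumes only $P^{g+k-1}$ rather than $\omega_C^{k+1}$ — and verify $d\alpha = 0$ by the chain rule combined with the scrollar relations above. Nonvanishing of $\alpha$ would follow, as in Proposition \ref{effnonvanq=2k+2}, from the integral-domain structure of $T^{g-1}R \otimes S^{\ast}H^0(C,\omega_C)$ by comparing leading monomials in $u, v$.

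The hard part is the construction in the previous paragraph. In Proposition \ref{effnonvanq=2k+2} the class is forced to use the full canonical wedge $\textup{alt}(x_1\otimes\cdots\otimes x_g)$, whose $d$-closedness is the elementary fact that the top exterior power of $H^0(C,\omega_C)$ is a cocycle; this symmetrically loads $\omega_C^{k+1}$ onto every tensor slot. Here, by contrast, I must build a $d$-closed alternating tensor whose tensor slots carry only degree $g+k-1$ in $u,v$ while the symmetric slot still carries $S^{k+1}H^0(C,\omega_C)$, of degree $(g-1)(k+1)$. This asymmetry is exactly what the rank-two relations $\ell_m x_i = \ell_{m-1}x_{i+1}$ are meant to supply, but organizing the antisymmetrization so that every term produced by $d$ cancels in pairs, and separately confirming that the resulting class is genuinely nonzero, is the delicate bookkeeping; I expect the parity of $k$ — as in the split between cases $(1)$ and $(2)$ of Proposition \ref{effnonvanq=2k+2} — to reappear when choosing between a symmetric and an alternating contraction of $s_0$.
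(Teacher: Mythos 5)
Your reduction to the single nonvanishing $K_{g-1,0}(\Sigma_k,\omega_{\Sigma_k};\sO_{\Sigma_k}(1))\neq 0$ is correct and matches how the paper uses the remark preceding Proposition \ref{effnonvanq=2k+2}. The gap is in the second half: the entire content of the proposition is the production of a nonzero class in $\ker\delta$, and you have not produced it. You identify the right ingredients (the splitting $\omega_C=P^{g-1}$, the sections $\ell_m=s_0u^{g+k-1-m}v^m$, and the scrollar relations $\ell_m x_i=\ell_{m-1}x_{i+1}$, which are correct), but you explicitly defer ``organizing the antisymmetrization so that every term produced by $d$ cancels'' and the verification that the result is nonzero. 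That is not routine bookkeeping: unlike Proposition \ref{effnonvanq=2k+2}, where $d$-closedness of $\textup{alt}(x_1\otimes\cdots\otimes x_g)$ is automatic and the class is written down in one line, here no candidate formula for $\alpha$ is on the table, and the parity discussion you append is a symptom of that. As it stands the proposal is a plan for a proof, not a proof.

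The paper's argument sidesteps this computation entirely. Setting $A=\sO_{\nP^1}(g-1)$ and $B=\sO_{\nP^1}(g+k-1)$, the hypothesis $H^0(C,L\otimes P^{-g-k+1})\neq 0$ yields an injective chain map from the Koszul differential $\delta'$ on $\A{g-1}H^0(\nP^1,B)\otimes S^{k+1}H^0(\nP^1,A)$ into $\delta$ on $\A{g-1}H^0(C,L)\otimes S^{k+1}H^0(C,\omega_C)$, so it suffices that $\delta'$ is not injective. Regarding $(\nP^1)_{k+1}=\nP^{k+1}$, one identifies $\ker(\delta')=H^0(\nP^{k+1},\A{g-1}M_{E_{k+1,B}}\otimes S_{k+1,A})$, and since $M_{E_{k+1,B}}=H^0(\nP^1,\sO_{\nP^1}(g-2))\otimes\sO_{\nP^{k+1}}(-1)$ has rank $g-1$ while $S_{k+1,A}=\sO_{\nP^{k+1}}(g-1)$, this sheaf is $\sO_{\nP^{k+1}}$ and the kernel is nonzero. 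The generator of that one-dimensional kernel, expanded in the coordinates $u,v$, is exactly the explicit tensor your scrollar relations are meant to assemble; if you want to complete your route, compute that generator directly, but the geometric packaging is what makes all the cancellations automatic.
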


\begin{proof}
Let $A:=\sO_{\nP^1}(g-1)$ and $B:=\sO_{\nP^1}(g+k-1)$. Then $H^0(C, L \otimes \tau^* B^{-1}) \neq 0$. We have a commutative diagram
$$
\begin{tikzcd}
\A{g-1} H^0(\nP^1, B) \otimes S^{k+1}H^0(\nP^1, A) \ar[r, "\delta' "] \ar[hook, d] &
\A{g-2} H^0(\nP^1, B) \otimes H^0(\nP^1, B \otimes A)  \otimes S^{k} H^0(\nP^1, A) \ar[hook, d] \\
\A{g-1} H^0(C, L) \otimes S^{k+1}H^0(C, \omega_C) \ar[r, "\delta", swap]   &
\A{g-2} H^0(C, L) \otimes H^0(C, L \otimes \omega_C)  \otimes S^{k} H^0(C, \omega_C) .
\end{tikzcd}
$$
It suffices to show that the upper horizontal map $\delta'$ is not injective. 
To this end, notice that $\delta'$ can be identified with the map
$$
\delta' \colon \A{g-1} H^0(\nP^1, B) \otimes H^0(\nP^{k+1},  S_{k+1, A})
 \longrightarrow \A{g-2}H^0(\nP^1, B) \otimes H^0(\nP^{k+1},  E_{k+1, B} \otimes S_{k+1, A})
$$
by regarding $\nP^{k+1} = (\nP^1)_{k+1}$.
Thus we obtain
$$
\ker(\delta') = H^0(\nP^{k+1}, \A{g-1} M_{E_{k+1, B}} \otimes S_{k+1, A}).
$$
Since 
$$
M_{E_{k+1, B}} = H^0(\nP^1, \sO_{\nP^1}(g-2)) \otimes \sO_{\nP^{k+1}}(-1)~~\text{ and }~~S_{k+1, A} = \sO_{\nP^{k+1}}(g-1),
$$ 
it follows that $\ker(\delta') = H^0(\nP^{k+1}, \sO_{\nP^{k+1}}) \neq 0$.
\end{proof}

\begin{Rmk}\label{rmk:effnonvanq=2k+2}
The last part of Proposition \ref{effnonvanq=2k+2} holds  as soon as
$$
\deg L \geq \begin{cases}
g+(k+1)(2g-2)&\textup{when $k$ is even}\\
2g-2 + (k+1)(2g-2)&\textup{when $k$ is odd}.
\end{cases}
$$
When $C$ is hyperelliptic, Proposition \ref{effnonvanq=2k+2hyperelliptic} holds as soon as $\deg L \geq 3g+2k-2$.
\end{Rmk}

\begin{Ex}
Suppose that $C$ is nonhyperelliptic and $L:=\omega_C(D)$, where $D$ is a general divisor of degree $g-1$ so that $h^0(C, \sO_C(D))=h^1(C, \sO_C(D))=0$. Then $\deg L = 3g-3$, and $K_{g-1, 0}(C, \omega_C; L) = 0$ by  \cite[Theorem 2]{GL}.
For an integer $1 \leq k \leq (g-4)/2$, we have $\deg L \geq 2g+2k+1$. Consider the commutative diagram\\[-20pt]

\begin{footnotesize}
$$
\begin{tikzcd}[column sep=2.3cm] 
\A{g-1} H^0(C, L) \otimes S^{k+1} H^0(C, \omega_C) \ar[r,"\delta"] \ar[d,"\id_{\wedge^{g-1}H^0(C, L)} \otimes m",swap] & \A{g-2} H^0(C, L) \otimes H^0(C, L \otimes \omega_C) \otimes S^k H^0(C, \omega_C) \ar[d,equal] \\
\A{g-1} H^0(C, L) \otimes H^0(C, \omega_C) \otimes S^k H^0(C, \omega_C) \ar[r,"\delta \otimes \id_{S^k H^0(C, \omega_C)}",swap] & \A{g-2} H^0(C, L) \otimes H^0(C, L \otimes \omega_C) \otimes S^k H^0(C, \omega_C),
\end{tikzcd}
$$
\end{footnotesize}

\noindent where  $m \colon S^{k+1} H^0(C, \omega_C) \to H^0(C, \omega_C) \otimes S^k H^0(C, \omega_C)$ is given by 
$m(f) = \sum_{j=1}^g x_j \otimes \partial f/\partial x_j$. As $K_{g-1, 0}(C, \omega_C; L)$ is the kernel of the Koszul differential
$$
\delta \colon \A{g-1} H^0(C, L) \otimes H^0(C, \omega_C) \longrightarrow \A{g-2} H^0(C, L) \otimes H^0(C, L \otimes \omega_C),
$$
we see that $K_{g-1, 0}(\Sigma_k, \omega_{\Sigma_k}; \sO_{\Sigma_k}(1)) \subseteq K_{g-1, 0}(C, \omega_C; L) \otimes S^k H^0(C, \omega_C)$. Thus we obtain $K_{g-1, 0}(\Sigma_k, \omega_{\Sigma_k}; \sO_{\Sigma_k}(1))=0$ in this case.
\end{Ex}

\subsection*{Effective Vanishing for $q=2k+1$}
Let $c:=\gamma^1(C)=\gon(C)-1$. Then $\omega_C$ is $(c-1)$-very ample. For any $1 \leq p \leq c$, as $h^0(C, \omega_C(-\xi)) = g-p$ for all $\xi \in C_p$, we see that $M_{E_{p, \omega_C}} = \pr_{1,*} (\sO_{C_p} \boxtimes \omega_C)(-D_{p,1})$ is a vector bundle on $C_p$. First, we prove the following vanishing result:

\begin{Prop}[{cf. \cite[Proposition 2.1]{EL3}}]\label{cohvansymmM}
Assume that $\deg L \geq (c^2+kc+k+1)(g-1)+1$. Then
$$
H^i(C_p, S^k M_{E_{p, \omega_C}} \otimes N_{p,L}) = 0~~\text{ for $i > 0$ and $1 \leq p \leq c$}.
$$
\end{Prop}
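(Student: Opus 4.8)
\emph{Overall strategy.} Arguing as in Lemma~\ref{wedgeM_{E_{k,L}}} but with symmetric powers in place of exterior powers, one has $M_p^0 S_{k,\omega_C}=S^k M_{E_{p,\omega_C}}$, so the statement is precisely the \emph{effective} form of Lemma~\ref{Fujita} for $\sF=S_{k,\omega_C}$: the vanishing itself is known, and the entire content is the explicit bound on $\deg L$. I would therefore prove it by induction on $p$ (with all $k$ allowed in the inductive hypothesis), descending along the addition map $\sigma_p\colon C_{p-1}\times C\to C_p$ and peeling off one symmetric factor at a time. The base case $k=0$ is immediate from Lemma~\ref{H^iS_k,N_k}, which gives $H^i(C_p,N_{p,L})=0$ for $i>0$ as soon as $\deg L\ge 2g-1$, and the descent terminates at $C_0=\operatorname{Spec}\kk$.

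\emph{The descent sequence.} The engine is a short exact sequence on $C_{p-1}\times C$. Since $p\le c$ and $\omega_C$ is $(c-1)$-very ample, every effective divisor of degree $p$ imposes independent conditions on $\omega_C$, so the inclusion $H^0(C,\omega_C(-\eta-x))\hookrightarrow H^0(C,\omega_C(-\eta))$ has a one-dimensional cokernel for every $(\eta,x)\in C_{p-1}\times C$. Globalizing, and using $M_{E_{p,\omega_C}}=\pr_{1,*}(\sO_{C_p}\boxtimes\omega_C)(-D_{p,1})$, one obtains an exact sequence of vector bundles $0\to\sigma_p^* M_{E_{p,\omega_C}}\to\pr_1^* M_{E_{p-1,\omega_C}}\to\mathcal L'\to 0$ with $\mathcal L'$ a line bundle whose class is $\pr_2^*\omega_C$ twisted along $-D_{p-1,1}$. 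Because $\mathcal L'$ has rank one, $\A{2}\mathcal L'=0$, and the standard complex resolving the $k$-th symmetric power of a sub-bundle truncates to the short exact sequence $0\to\sigma_p^* S^k M_{E_{p,\omega_C}}\to\pr_1^* S^k M_{E_{p-1,\omega_C}}\to\pr_1^* S^{k-1}M_{E_{p-1,\omega_C}}\otimes\mathcal L'\to 0$, which lowers both $p$ and (in the last term) $k$.

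\emph{Reduction to $C_{p-1}$.} As $\sigma_p$ is finite flat of degree $p$ and $\operatorname{char}\kk=0$, the sheaf $S^k M_{E_{p,\omega_C}}\otimes N_{p,L}$ is a direct summand of $\sigma_{p,*}\sigma_p^*(\,\cdot\,)$, so it suffices to kill the higher cohomology of its pullback to $C_{p-1}\times C$. Tensoring the descent sequence by $\sigma_p^* N_{p,L}=(N_{p-1,L}\boxtimes L)(-D_{p-1,1})$ (a direct computation from the definitions in Section~\ref{secsym}) and pushing forward by $\pr_1$, the hypothesis $\deg L\gg 0$ forces $H^1(C,L(-\eta))=0$, so $R^{>0}\pr_{1,*}$ vanishes and $R^0\pr_{1,*}[(\sO\boxtimes L)(-D_{p-1,1})]=M_{E_{p-1,L}}$. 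Thus both outer terms are computed on $C_{p-1}$ as $H^i$ of $S^{\bullet}M_{E_{p-1,\omega_C}}\otimes M_{E_{p-1,L}}\otimes N_{p-1,L}$ (the second carrying an extra twist by $\mathcal L'$). Feeding in the evaluation sequence $0\to M_{E_{p-1,L}}\to H^0(C,L)\otimes\sO_{C_{p-1}}\to E_{p-1,L}\to 0$ reduces, for $i\ge 2$, to the inductive hypothesis on $C_{p-1}$ together with the analogous vanishing twisted by the globally generated bundle $E_{p-1,L}$; the latter is absorbed by strengthening the inductive statement to permit such positive twists.

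\emph{The main obstacle.} The degree $i\ge 2$ is clean, but $i=1$ is not: the long exact sequence identifies $H^1$ of the sub-bundle with the cokernel of the multiplication map $H^0(\mathcal Y)\to H^0(\mathcal Z)$ on the two outer terms, so the case $i=1$ is equivalent to an effective surjectivity (Castelnuovo--Mumford-type) statement, and this is exactly where the positivity of $N_{p-1,L}$—hence the lower bound on $\deg L$—must be used quantitatively. I expect the real difficulty, and the origin of the precise constant $(c^2+kc+k+1)(g-1)$, to be the positivity bookkeeping: one must track through each descent step how much of the positivity of $N_{p,L}$ is consumed by the auxiliary factors $M_{E_{p-1,L}}$, $E_{p-1,L}$, and $\mathcal L'$, and check that $\deg L\ge(c^2+kc+k+1)(g-1)+1$ leaves every intermediate line bundle positive enough for both the effective Fujita--Serre vanishing of Lemma~\ref{Fujita} and the multiplication-map surjectivity to apply. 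This accounting, rather than any individual cohomological step, will be the crux.
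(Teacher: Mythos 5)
Your proposal is not a proof: it is a programme whose decisive steps are explicitly deferred. You yourself flag that the case $i=1$ of your descent reduces to an effective surjectivity of a multiplication map, and that the derivation of the precise constant $(c^2+kc+k+1)(g-1)+1$ rests on ``positivity bookkeeping'' that you do not carry out. Since the entire content of the proposition is the effective bound (as you correctly observe, the qualitative vanishing is already Lemma~\ref{Fujita}), leaving these two points open means the argument establishes nothing beyond what was already known. There is also a structural worry with the induction itself: after pushing forward along $\pr_1$, the outer terms acquire an extra tensor factor $M_{E_{p-1,L}}$, a bundle of rank $h^0(C,L)-(p-1)$, and your proposed fix --- strengthening the inductive statement to allow twists by $E_{p-1,L}$ and its kernel bundle --- introduces a new auxiliary bundle at every step of the descent, so it is not clear the induction closes, let alone with a bound independent of the number of steps.

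The paper avoids all of this with a short, direct argument in the spirit of \cite[Proposition 2.1]{EL3}: choose a \emph{general} subspace $V\subseteq H^0(C,\omega_C)$ of dimension $2p$ still generating $E_{p,\omega_C}$, so that $M_{E_{p,\omega_C}}$ is an extension of a trivial bundle by the rank-$p$ kernel bundle $M_V$; the associated filtration of $S^kM_{E_{p,\omega_C}}$ reduces the claim to $H^i(C_p,S^jM_V\otimes N_{p,L})=0$ for $0\le j\le k$. One then writes
$$
S^jM_V\otimes N_{p,L}=\omega_{C_p}\otimes S^j(M_V\otimes N_{p,\omega_C})\otimes\det(M_V\otimes N_{p,\omega_C})\otimes A_j,
\qquad A_j:=N_{p,L}\otimes N_{p,\omega_C}^{-(p+j)},
$$
where $M_V\otimes N_{p,\omega_C}$ is globally generated and $A_j$ is ample precisely under the stated degree hypothesis, and concludes by Griffiths vanishing. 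This is where the constant comes from, and it is the single idea (reduction to a low-rank globally generated bundle plus Griffiths vanishing) missing from your outline. I would recommend replacing the descent along $\sigma_p$ with this reduction.
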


\begin{proof}
Let $V \subseteq H^0(C, \omega_C)$ be a general subspace of dimension $2p$ so that the evaluation map $\operatorname{ev} \colon V \otimes C_p \to E_{p, \omega_C}$ is surjective, and $M_V$ be the kernel of the evaluation map. Then $M_V$ is a vector bundle of rank $p$ on $C_p$. We have a short exact sequence
$$
\begin{tikzcd}
0 \ar[r]  & M_V  \ar[r] & M_{E_{p, \omega_C}} \ar[r] & (H^0(C, \omega_C)/V) \otimes \sO_{C_p} \ar[r] & 0.
\end{tikzcd}
$$
By considering the filtration of $S^k M_{E_{p, \omega_C}}$ associated to this short exact sequence, we reduce the problem to proving that
\begin{equation}\label{eq:vanS^jM_V}
H^i(C_p, S^j M_V \otimes N_{p,L}) = 0~~\text{ for $i>0$ and $0 \leq j \leq k$}.
\end{equation}
Notice that $M_V \otimes N_{p, \omega_C}$ is globally generated and $A_j:=N_{p,L} \otimes N_{p, \omega_C}^{-(p+j)}$ is ample for $0 \leq j \leq k$ (see \cite[Proof of Proposition 2.1]{EL3}). Then
$$
S^j M_V \otimes N_{p,L} = N_{p, \omega_C} \otimes S^j (M_V \otimes N_{p, \omega_C}) \otimes \det (M_V \otimes N_{p, \omega_C}) \otimes A_j.
$$
As $\omega_{C_p} = N_{p, \omega_C}$, the required cohomology vanishing (\ref{eq:vanS^jM_V}) follows from Griffiths vanishing \cite[Variant 7.3.2]{positivity}.
\end{proof}

\begin{Prop}\label{effvanq=2k+1}
Assume that $\deg L \geq \big( c^2 + (c+1)(k+1+\lfloor c/2 \rfloor)+1 \big)(g-1)+1$. Then
$$
K_{p,2k+1}(\Sigma_k, \sO_{\Sigma_k}(1)) =0~~\text{ for $p \geq e-c+1$}.
$$
\end{Prop}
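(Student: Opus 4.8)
The plan is to run the inductive vanishing argument from the proof of Theorem~\ref{main}, replacing the qualitative input Lemma~\ref{Fujita} by the effective Griffiths-type vanishing of Proposition~\ref{cohvansymmM}. Write $q=2k+1$, so $q^*=2k+2-q=1$, fix $p\geq e-c+1$, and set $p^*:=e-p\leq c-1$. As in the vanishing part of Theorem~\ref{main}, I would use the short exact sequence $0\to\sI_{\Sigma_k|\Sigma_{k+1}}\to\sO_{\Sigma_{k+1}}\to\sO_{\Sigma_k}\to0$ and the exact sequence (\ref{exseqforvan}) to reduce $K_{p,2k+1}(\Sigma_k,\sO_{\Sigma_k}(1))=0$ to the two statements (\ref{vanishing2}) and (\ref{vanishing3}), and then induct exactly as in Theorem~\ref{main} on $q-k'-1$ across the secant varieties $\Sigma_{k'}$. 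Since $q=2k+1$ is held fixed, only the finitely many $\Sigma_{k},\Sigma_{k+1},\ldots,\Sigma_{2k}$ occur; at $k'=2k$ one has $q=k'+1$, so (\ref{vanishing2}) holds trivially as in the base case of Theorem~\ref{main} and the induction terminates.

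The effective content is the treatment of (\ref{vanishing3}) at each level $\Sigma_{k'}$. By Lemma~\ref{duality} for $\Sigma_{k'+1}$, the group there is the dual of $H^{q^*_{k'}+1}(C_{p^*+1}\times C_{k'+2}, (N_{p^*+1,L}\boxtimes S_{k'+2,\omega_C})(-D_{p^*+1,k'+2}))$ with $q^*_{k'}=2(k'-k)+1$; the key point is that the first symmetric product is always $C_{p^*+1}$ with $p^*+1\leq c$, independently of $k'$. The Leray spectral sequence for $\pr_1\colon C_{p^*+1}\times C_{k'+2}\to C_{p^*+1}$ then reduces the problem to the vanishing of $H^i(C_{p^*+1}, M^{q^*_{k'}+1-i}_{p^*+1}S_{k'+2,\omega_C}\otimes N_{p^*+1,L})$. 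Because $p^*+1\leq c=\gon(C)-1$, every $\xi\in C_{p^*+1}$ has $h^1(C,\omega_C(-\xi))=1$, so by Lemma~\ref{H^iS_k,N_k} and base change the sheaf $M^b_{p^*+1}S_{k'+2,\omega_C}$ vanishes for $b\geq2$, equals $S^{k'+2}M_{E_{p^*+1,\omega_C}}$ for $b=0$ (the symmetric analogue of Lemma~\ref{wedgeM_{E_{k,L}}}), and is a line-bundle twist of $S^{k'+1}M_{E_{p^*+1,\omega_C}}$ for $b=1$. Thus only the terms $i=q^*_{k'}$ and $i=q^*_{k'}+1$ survive.

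For the levels with $q^*_{k'}$ large relative to $p^*$ both surviving groups vanish automatically, their cohomological degree exceeding $\dim C_{p^*+1}=p^*+1$; this leaves only finitely many levels, on which the largest symmetric power occurring is $k'+2\leq k+1+\lfloor c/2\rfloor$. On those levels I would invoke Proposition~\ref{cohvansymmM} with its $k$ replaced by $k+1+\lfloor c/2\rfloor$, which is exactly why the hypothesis $\deg L\geq\big(c^2+(c+1)(k+1+\lfloor c/2\rfloor)+1\big)(g-1)+1$ appears.

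I expect the main obstacle to be the $b=1$ term. One must first prove the symmetric analogue of Lemma~\ref{wedgeM_{E_{k,L}}} identifying $M^0_{p^*+1}S_{k'+2,\omega_C}$ with $S^{k'+2}M_{E_{p^*+1,\omega_C}}$, and then, via a relative Serre-duality computation, pin down the twisting line bundle $\Theta$ on $C_{p^*+1}$ for which $M^1_{p^*+1}S_{k'+2,\omega_C}\cong S^{k'+1}M_{E_{p^*+1,\omega_C}}\otimes\Theta$, and check that its degree is controlled so that tensoring by the very positive $N_{p^*+1,L}$ keeps Proposition~\ref{cohvansymmM} applicable within the stated bound. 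The careful bookkeeping of the constant, and the verification that across the whole induction no larger symmetric power or cohomological degree is ever required, is the delicate remaining point.
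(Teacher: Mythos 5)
Your proposal follows essentially the same route as the paper's proof: the same iteration of the ideal-sheaf exact sequence across the secant varieties $\Sigma_{k'}$ with $q=2k+1$ fixed, the same reduction via Lemma~\ref{duality} and the Leray spectral sequence for $\pr_1$ to the cohomology of $S^{k'+1}M_{E_{p^*+1,\omega_C}}$ and $S^{k'+2}M_{E_{p^*+1,\omega_C}}$ twisted by $N_{p^*+1,L}$ on $C_{p^*+1}$, and the same invocation of Proposition~\ref{cohvansymmM} with symmetric power at most $k+1+\lfloor c/2\rfloor$, which produces exactly the stated degree bound. The one point you leave open --- the possible line-bundle twist on $M^1_{p^*+1}S_{k'+2,\omega_C}$ --- is in fact trivial, since for $\xi\in C_{p^*+1}$ the space $H^0(C,\sO_C(\xi))$ is one-dimensional and canonically spanned by the constants, so $H^1(C,\omega_C(-\xi))^{\vee}$ is canonically trivialized; this is what the paper asserts via ``a similar argument'' to Lemma~\ref{wedgeM_{E_{k,L}}}.
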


\begin{proof}
Arguing as in the proof of Theorem \ref{main}, we reduce the problem to proving that
$$
H^{2k+1}(\Sigma_{k+i}, \A{p+2k} M_{\sO_{\Sigma_{k+i}}(1)} \otimes \sI_{\Sigma_{k+i-1}|\Sigma_{k+i}}(1)) = 0~~\text{ for $i \geq 1$},
$$
which is equivalent to
$$
H^{2i}(C_{p^*} \times C_{k+1+i}, (N_{p^*, L} \boxtimes S_{k+1+i, \omega_C})(-D_{p^*, k+1+i})) = 0~~\text{ for $i \geq 1$},
$$
where $p^*:=e-p+1 \leq c$, by Lemma \ref{duality}. Using Lemma \ref{H^iS_k,N_k}, a similar argument of the proof of Lemma \ref{wedgeM_{E_{k,L}}} yields that
$$
M_{p^*}^j  S_{k+1+i, \omega_C} = \begin{cases} S^{k+1+i} M_{E_{p^*, \omega_C}} & \text{if $j=0$} \\ S^{k+i} M_{E_{p^*, \omega_C}} & \text{if $j=1$} \\ 0 & \text{if $j \geq 2$}.\end{cases}
$$
Then it is enough to show that
$$
H^{2i-1+j}(C_{p^*}, S^{k+i+j} M_{E_{p^*, \omega_C}} \otimes N_{p^*, L}) = 0~~\text{ for $i \geq 1$ and $j=0,1$},
$$
but this follows from Proposition \ref{cohvansymmM}.
\end{proof}

\begin{Rmk}
In view of \cite[Theorem 4.1]{ENP1} and \cite[Theorem 3.1]{Rathmann}, we expect that Propositions \ref{cohvansymmM} and \ref{effvanq=2k+1} hold under a much weaker assumption. 
\end{Rmk}

\begin{Ex}
Let $C$ be a smooth plane quartic curve. Then the genus $g$ of $C$ is $3$, and $\gamma^0(C)=0, \gamma^1(C)=2, \gamma^2(C)=2$. Let $L_1:=\omega_C^3,L_2:=L_1(-x),L_3:=L_1(-x-y)$, where $x,y$ are random points on $C$. Note that $\deg L_1 = 12,~\deg L_2 = 11,~\deg L_3 = 10$.  A \texttt{Macaulay2} \cite{GS} computation shows that the Betti tables of $R(\Sigma_1, \sO_{\Sigma_1}(1))$ for $L=L_1, L_2, L_3$ are the following:

\smallskip

\begin{footnotesize}
\begin{table}[h]
\texttt{\begin{tabular}{c|ccccccc}
       & $0$ & $1$ & $2$ & $3$ & $4$ & $5$ & $6$ \\ \hline
$0$ & 1 & - & - & - & - & - & - \\ 
$1$ & - & - & - & - & - & - & - \\ 
$2$ & - & 38 & 108 & 102 & 10 & - & - \\ 
$3$ & - & - & - & - & 30 & - & - \\ 
$4$ & - & - & - & - & 3 & 18 & 6 \\ 
\end{tabular}}\quad~\quad~\quad~\quad
\texttt{\begin{tabular}{c|cccccc}
       & $0$ & $1$ & $2$ & $3$ & $4$ & $5$ \\ \hline
$0$ & 1 & - & - & - & - & -  \\ 
$1$ & - & - & - & - & - & -  \\ 
$2$ & - & 20 & 36 & 6 & - & -  \\ 
$3$ & - & - & - & 20 & 1 & -  \\ 
$4$ & - & - & - & 1 & 15 & 6  \\ 
\end{tabular}}\quad~\quad~\quad~\quad
\texttt{\begin{tabular}{c|ccccc}
       & $0$ & $1$ & $2$ & $3$ & $4$ \\ \hline
$0$ & 1 & - & - & - & - \\ 
$1$ & - & - & - & - & - \\ 
$2$ & - & 8 & 3 & - & - \\ 
$3$ & - & - & 12 & 2 & - \\ 
$4$ & - & - & - & 12 & 6 \\ 
\end{tabular}}\\[10pt]
\caption{\label{table3}The Betti tables of $R(\Sigma_1, \sO_{\Sigma_1}(1))$}
\end{table}
\end{footnotesize}

\noindent \\[-25pt] When $L=L_3$, we see that $K_{g-1, 0}(\Sigma_1, \omega_{\Sigma_1}; \sO_{\Sigma_1}(1)) =0$. In this case, $h^0(C, L \otimes \omega_C^{-2})=h^0(C, \omega_C(-x-y))=1 < 2=g-1$. This shows that the condition in Proposition \ref{effnonvanq=2k+2} is sharp. On the other hand, notice that $K_{1,1}(\Sigma_1, \omega_{\Sigma_1}; \sO_{\Sigma_1}(1)) \neq 0$ for $L=L_2, L_3$; in other words, the conclusion of Proposition \ref{effvanq=2k+1} does not hold. However, $K_{1,1}(C, \omega_C; L)=0$ for $L=L_2, L_3$ by \cite[Theorem 1.1]{Rathmann} since $\deg L \geq 9 = 4g-3$.
\end{Ex}

We now turn to the quantitative study of the nonzero Betti numbers 
$$
\kappa_{p,q}(\Sigma_k, \sO_{\Sigma_k}(1)):=\dim K_{p,q}(\Sigma_k, \sO_{\Sigma_k}(1)).
$$ 
It would be exceedingly interesting to know whether there is a uniform asymptotic behavior of $\kappa_{p,q}(\Sigma_k, \sO_{\Sigma_k}(1))$ as the positivity of $L$ grows. If so, one may further ask what kind of geometry of $C$ is related to this asymptotic behavior.

\medskip

For integers $m, \ell \geq 1$, we define
$$
\mathcal{L}_m^{\ell} = \mathcal{L}_m^{\ell}(C) :=\{\xi \in C_m \mid h^1(C, \omega_C(-\xi)) \geq \ell\}.
$$
Let $e:=\codim \Sigma_k$. 

\begin{Prop}
Fix an integer $k+1 \leq q \leq 2k+1$. Assume that $L=L_d:=\sO_C(dA +P)$ for an integer $d \gg 0$, where $A$ is an ample divisor on $C$ and $P$ is any divisor on $C$. Then $\kappa_{e-\gamma^{2k+2-q}(C),q}(\Sigma_k, \sO_{\Sigma_k}(1))$ is a polynomial in $d$ of degree $\dim \mathcal{L}_{2k+2-q + \gamma^{2k+2-q}(C)}^{2k+3-q}(C)$. 
\end{Prop}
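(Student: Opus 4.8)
The plan is to show that, at the single boundary value $p=e-\gamma^{q^*}(C)$ with $q^*:=2k+2-q$, the group $K_{p,q}(\Sigma_k,\sO_{\Sigma_k}(1))$ is literally the space of global sections of one fixed coherent sheaf on a symmetric product, twisted by the growing ample bundle $N_{q^*+\gamma^{q^*},L_d}$, and then to read off the polynomial degree from the Snapper--Kleiman theory of Euler characteristics. Write $\gamma:=\gamma^{q^*}(C)$, so that $p^*:=e-p=\gamma$ and $m:=p^*+q^*=q^*+\gamma$. First I would run the exact sequence (\ref{exseqforvan}) for $\Sigma_k\subseteq\Sigma_{k+1}$ at $p=e-\gamma$; its two outer terms are $K_{p,q}(\Sigma_{k+1},\sO_{\Sigma_{k+1}}(1))$ and $K_{p-1,q+1}(\Sigma_{k+1},\sO_{\Sigma_{k+1}}(1))$. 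Applying Theorem \ref{main} to $\Sigma_{k+1}$ (of codimension $e-2$), together with the monotonicity $\gamma^{q^*}\le\gamma^{q^*+1}\le\gamma^{q^*+2}$ from Lemma \ref{basicgonseq}, both outer terms vanish: their nonvanishing ranges have upper bounds $e-2-\gamma^{q^*+1}<p-1$ and $e-2-\gamma^{q^*+2}<p$ respectively (for $q=k+1$ the first term instead vanishes by the Betti table of $\Sigma_{k+1}$ in \cite{ENP1}). Hence $K_{p,q}(\Sigma_k,\sO_{\Sigma_k}(1))$ is isomorphic to the middle term $H^q(\Sigma_{k+1},\A{p+q-1}M_{\sO_{\Sigma_{k+1}}(1)}\otimes\sI_{\Sigma_k|\Sigma_{k+1}}(1))$, which by Lemma \ref{duality} and the Leray spectral sequence for $\pr_1$ (the $i>0$ terms dying by Lemma \ref{Fujita}) has dimension
$$
\kappa_{e-\gamma,q}(\Sigma_k,\sO_{\Sigma_k}(1))=h^0\big(C_m,\;\mathcal{F}\otimes N_{m,L_d}\big),\qquad \mathcal{F}:=M_m^{q^*+1}S_{k+2,\omega_C},
$$
where the sheaf $\mathcal{F}$ is fixed and independent of $d$.

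Second I would identify $\Supp\mathcal{F}$. By cohomology and base change the fibre of $\mathcal{F}$ over $\xi\in C_m$ is $H^{q^*+1}(C_{k+2},S_{k+2,\omega_C(-\xi)})$, which by Lemma \ref{H^iS_k,N_k} equals $S^{k+1-q^*}H^0(C,\omega_C(-\xi))\otimes\A{q^*+1}H^1(C,\omega_C(-\xi))$. This is nonzero exactly when $h^1(C,\omega_C(-\xi))\ge q^*+1$ and, when $q>k+1$, also $h^0(C,\omega_C(-\xi))\ge1$. The first condition says precisely $\xi\in\mathcal{L}^{q^*+1}_m(C)$, while on that locus the Riemann--Roch theorem gives $h^0(C,\omega_C(-\xi))=g-1-m+h^1(C,\omega_C(-\xi))\ge g-\gamma$, which is $\ge1$ because $\gamma^{q^*}(C)\le g-1$ in the range considered. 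Therefore $\Supp\mathcal{F}=\mathcal{L}^{q^*+1}_{q^*+\gamma^{q^*}(C)}(C)$.

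Finally I would extract the polynomial behaviour. Using the identity $N_{m,\sO_C(dA+P)}=N_{m,\sO_C(P)}\otimes S_{m,\sO_C(A)}^{\otimes d}$ and the ampleness of $H:=S_{m,\sO_C(A)}$ (the descent of the ample $A^{\boxtimes m}$ under the finite morphism $q_m$), the twist $N_{m,L_d}$ is a fixed line bundle times $H^{\otimes d}$. Lemma \ref{Fujita} (equivalently plain Serre vanishing) gives $H^{i}(C_m,\mathcal{F}\otimes N_{m,L_d})=0$ for $i>0$ and $d\gg0$, so $h^0=\chi(\mathcal{F}\otimes N_{m,\sO_C(P)}\otimes H^{\otimes d})$; by the Snapper--Kleiman theorem this Euler characteristic is a numerical polynomial in $d$ of degree exactly $\dim\Supp\mathcal{F}$, the leading coefficient being strictly positive since $H$ is ample. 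Combining the three steps yields the asserted degree $\dim\mathcal{L}^{2k+3-q}_{2k+2-q+\gamma^{2k+2-q}(C)}(C)$.

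I expect the main obstacle to be the exact identity $\kappa_{e-\gamma,q}=\dim H^0(C_m,\mathcal{F}\otimes N_{m,L_d})$ in the first step: it is not merely a vanishing or a nonvanishing statement but a precise isomorphism, and it consumes the \emph{full} conclusion of Theorem \ref{main} applied to the next secant variety $\Sigma_{k+1}$, so the gonality-sequence inequalities bounding both neighbouring Koszul groups must be verified with care. A secondary delicate point is the support computation when $\gamma^{q^*}(C)=g$ (possible only once $q^*\ge g$, hence $k\ge g$): there the factor $S^{k+1-q^*}H^0(C,\omega_C(-\xi))$ can vanish identically and force $\mathcal{F}=0$ even though $\mathcal{L}^{q^*+1}_m(C)=C_m$, so the clean identity $\Supp\mathcal{F}=\mathcal{L}^{q^*+1}_m(C)$ (and hence the stated degree) should be asserted precisely in the range $q^*\le g-1$, with the case $q=k+1$ covered separately because then $S^{k+1-q^*}=S^0=\kk$ is never zero.
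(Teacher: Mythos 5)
Your argument is correct and follows the paper's proof essentially verbatim: kill the two outer terms of the exact sequence (\ref{exseqforvan}) by applying Theorem \ref{main} to $\Sigma_{k+1}$ (using the monotonicity of the gonality sequence), reduce the surviving middle term to $h^0\big(C_{p^*+q^*}, M^{q^*+1}_{p^*+q^*}S_{k+2,\omega_C}\otimes N_{p^*+q^*,L_d}\big)$ via Lemmas \ref{duality} and \ref{Fujita}, and conclude by asymptotic Riemann--Roch on the support $\mathcal{L}^{q^*+1}_{p^*+q^*}(C)$. Your added care in the support computation --- the Riemann--Roch check that $h^0(C,\omega_C(-\xi))\ge g-\gamma^{q^*}(C)\ge 1$ on that locus, and the caveat that the identification of $\Supp M^{q^*+1}_{p^*+q^*}S_{k+2,\omega_C}$ with $\mathcal{L}^{q^*+1}_{p^*+q^*}(C)$ can fail when $\gamma^{q^*}(C)=g$ --- addresses a point the paper asserts without comment.
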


\begin{proof}
Put $p:=e-\gamma^{2k+2-q}(C)$. By Theorem \ref{main}, 
\begin{align*}
&H^{q-1}(\Sigma_{k+1}, \A{p+q-1}M_{\sO_{\Sigma_{k+1}}(1)} \otimes \sO_{\Sigma_{k+1}}(1)) = K_{p,q}(\Sigma_{k+1}, \sO_{\Sigma_{k+1}}(1)) = 0;\\
&H^{q}(\Sigma_{k+1}, \A{p+q-1}M_{\sO_{\Sigma_{k+1}}(1)} \otimes \sO_{\Sigma_{k+1}}(1)) = K_{p-1,q+1}(\Sigma_{k+1}, \sO_{\Sigma_{k+1}}(1)) = 0.
\end{align*}
Then the exact sequence (\ref{exseqforvan}) shows that\\[-20pt]

\begin{small}
$$
\kappa_{p,q}(\Sigma_{k}, \sO_{\Sigma_{k}}(1)) = h^{q-1}(\Sigma_{k}, \A{p+q-1}M_{\sO_{\Sigma_{k}}(1)} \otimes \sO_{\Sigma_{k}}(1)) = h^{q}(\Sigma_{k+1}, \A{p+q-1} M_{\sO_{\Sigma_{k+1}}(1)} \otimes \sI_{\Sigma_k|\Sigma_{k+1}}(1)).
$$
\end{small}

\noindent By Lemmas \ref{Fujita} and \ref{duality} and the Leray spectral sequence for $\pr_1 \colon C_{p^*+q^*} \times C_{k+2} \to C_{p^*+q^*}$, we have
$$
h^{q}(\Sigma_{k+1}, \A{p+q-1} M_{\sO_{\Sigma_{k+1}}(1)} \otimes \sI_{\Sigma_k|\Sigma_{k+1}}(1)) = h^0(C_{p^*+q^*}, M_{p^*+q^*}^{q^*+1} S_{k+2, \omega_C} \otimes N_{p^*+q^*, L}),
$$
where $p^*:=e-p$ and $q^*:=2k+2-q$. Note that $\Supp M_{p^*+q^*}^{q^*+1} S_{k+2, \omega_C} =  \mathcal{L}_{p^*+q^*}^{q^*+1}(C)$. As we may write $N_{p^*+q^*, L} = N_{p^*+q^*, \sO_C(P)} \otimes S_{p^*+q^*, \sO_C(A)}^d$ and $S_{p^*+q^*, \sO_C(A)}$ is ample, we see that 
$$
h^0(C_{p^*+q^*}, M_{p^*+q^*}^{q^*+1} S_{k+2, \omega_C} \otimes N_{p^*+q^*, L}) = \chi( M_{p^*+q^*}^{q^*+1} S_{k+2, \omega_C} \otimes N_{p^*+q^*, \sO_C(P)} \otimes S_{p^*+q^*, \sO_C(A)}^{d})
$$
is a polynomial in $d$ of degree $\dim  \mathcal{L}_{p^*+q^*}^{q^*+1}(C)$. 
\end{proof}

In the situation of the above proposition, for $e-g+1 \leq p \leq e$, Ein--Lazarsfeld \cite[Theorem C]{EL3} proved that $\kappa_{p,1}(C, \omega_C; L)$ is a polynomial in $d$ (see \cite{Yang} for a higher dimensional generalization). Thus it is natural to ask the following.

\begin{Q}
For $e-g+1 \leq p \leq e$ and $k+1 \leq q \leq 2k+2$, is $\kappa_{p,q}(\Sigma_k, \sO_{\Sigma_k}(1))$ a polynomial in $d:=\deg L$ when $d \gg 0$?
\end{Q}

In some cases, one can compute $\kappa_{p,q}(\Sigma_k, \sO_{\Sigma_k}(1))$ exactly. For instance, $\kappa_{e, 2k+2}(\Sigma_k, \sO_{\Sigma_k}(1)) = {g+k \choose k+1}$ (see \cite[Theorem 1.2]{ENP1}). In the curve case, Kemeny \cite[Theorem 1.1]{Kemeny} proved that if $C$ is a general curve of genus $g \geq 2k-1$ and gonality $k=\gamma^1(C)+1 \geq 4$ and $L$ is a line bundle on $C$ with $\deg L \geq 2g+k$, then 
$$
\kappa_{e-\gamma^1(C), 1}(C, L) = e-\gamma^1(C),
$$
where $e:=h^1(C, L) - 2$ is the codimension of $C$ in $\nP H^0(C, L) = \nP^r$. This theorem can be geometrically interpreted as follows. Let $\tau \colon C \to \nP^1$ be a branched covering of degree $k$. Then the linear spans of the fibers of $\tau$ in $\nP^r$ sweep out a $k$-dimensional scroll $S$ containing $C$. There is a natural injective map $\iota_p \colon K_{p,1}(S, \sO_S(1)) \to K_{p,1}(C, L)$. Kemeny's theorem says that $\iota_{e-\gamma^1(C)}$ is in fact an isomorphism. Along this line, one may ask the following:

\begin{Q}
Fix an integer $k+1 \leq q \leq 2k+1$. Under what conditions, can one compute $\kappa_{e-\gamma^{2k+2-q}(C),q}(\Sigma_k, \sO_{\Sigma_k}(1))$ exactly? In this case, can one find some interesting geometric meaning of spanning Koszul classes of $K_{e-\gamma^{2k+2-q}(C),q}(\Sigma_k, \sO_{\Sigma_k}(1))$?
\end{Q}

For an integer $k\geq 0$, suppose that $C$ is a general curve carrying a unique $(k+1)$-dimensional linear system $|L_1|$ of degree $\gamma^{k+1}(C)+k+1$. Then we expect that 
$$
\kappa_{e-\gamma^{k+1}(C),k+1}(\Sigma_k, \sO_{\Sigma_k}(1))=\binom{e-\gamma^{k+1}(C)+k}{k+1}.
$$
Suppose that the expectation is true. Let $M$ be a matrix given by the multiplication map
$$
H^0(C,L_1)\otimes H^0(C,L\otimes L_1^{-1}) \longrightarrow H^0(C,L),
$$
and $X\subseteq\mathbb P^r$ be the projective variety cut out by $(k+2)$-minors of $M$. Then the natural map 
$$
K_{e-\gamma^{k+1}(C),k+1}(X, \sO_X(1)) \longrightarrow K_{e-\gamma^{k+1}(C),k+1}(\Sigma_k, \sO_{\Sigma_k}(1))
$$
is an isomorphism. We remark that $R(X,\sO_X(1))$ is minimally resolved by the Eagon--Northcott complex associated to $M$. Thus $K_{e-\gamma^{k+1}(C),k+1}(\Sigma_k, \sO_{\Sigma_k}(1))$ is spanned by Koszul classes of the smallest rank $e-\gamma^{k+1}(C)+k+1$ (see \cite[Corollary 4.3]{CK}).

\bibliographystyle{amsalpha}

\end{document}